\theoremstyle{plain}
\newtheorem{theorem}{Theorem}[section]
\newtheorem{lemma}[theorem]{Lemma}
\theoremstyle{definition}
\newtheorem{example}[theorem]{Example}
\newtheorem{remark}[theorem]{Remark}
\numberwithin{equation}{section}
\newcommand{\bx}{{\bf x}}
\newcommand{\bv}{{\bf v}}
\newcommand{\bw}{{\bf w}}
\newcommand{\Th}{\mathcal{T}_h}
\newcommand{\norm}[1]{\lVert #1\rVert}
\newcommand{\trinorm}[1]{{\vert\kern-0.25ex\vert\kern-0.25ex\vert #1 \vert\kern-0.25ex\vert\kern-0.25ex\vert}}
\newcommand{\LT}{{L^2(\Omega)}}
\newcommand{\Ho}{{H^1_0(\Omega)}}
\newcommand{\HO}{{H^1(\Omega)}}
\newcommand{\eps}{\varepsilon}
\newcommand{\cB}{\mathcal{B}}
\newcommand{\dx}{{\rm d}\mathbf{x}}
\DeclareMathOperator*{\argmin}{argmin}
\title{A monotone finite element method for an elliptic distributed optimal control problem with a convection-dominated state equation
}
\author{SeongHee Jeong,\thanks{School of Computing and Data Science, Wentworth Institute of Technology, Boston, MA 02115 (\texttt{jeongs2@wit.edu})}
\and Seulip Lee,\thanks{Department of Mathematics, Tufts University, Medford, MA 02155 (\texttt{seulip.lee@tufts.edu})}
\and Sijing Liu,\thanks{Department of Mathematical Sciences,  Worcester Polytechnic Institute, Worcester, MA 01609 (\texttt{sliu13@wpi.edu})}
}
\date{
}
\begin{document}
\maketitle
    
\begin{abstract}
We propose and analyze a monotone finite element method for an elliptic distributed optimal control problem constrained by a convection-diffusion-reaction equation in the convection-dominated regime. 
The method is based on the edge-averaged finite element (EAFE) scheme, which is known to preserve the discrete maximum principle for convection-diffusion problems. 
We show that the EAFE discretization inherits the monotonicity property of the continuous problem and consequently preserves the desired-state bounds at the discrete level, ensuring that the numerical optimal state remains stable and free of nonphysical oscillations. 
The discrete formulation is analyzed using a combination of the EAFE consistency result and a discrete inf-sup condition, which together guarantee well-posedness and yield the optimal convergence order. 
Comprehensive numerical experiments are presented to confirm the theoretical findings and to demonstrate the robustness of the proposed scheme in the convection-dominated regimes.

\vskip 10pt
\noindent\textbf{Keywords:} Edge-averaged finite element (EAFE), elliptic optimal control problems, convection-dominated problems, monotone schemes, desired-state bounds, bound-preserving
\end{abstract}


\section{Introduction}
\label{sec:intro}

In this work, we consider an elliptic distributed optimal control problem constrained by a convection-diffusion-reaction equation in the convection-dominated regime. Elliptic distributed optimal control problems and their numerical approximations have been extensively studied in the literature \cite{becker2007optimal,brenner2020multigrid,brenner2021p1,liu2025robust,brenner2024c0,jeong2025optimal}, with numerous applications in engineering, aeronautics, aerodynamics, and related fields, see \cite{Tro,Lions} for comprehensive discussions. Among these problems, convection-dominated optimal control problems are particularly challenging, as strong convective effects substantially influence both the analytical properties and the numerical approximation. Optimal control problems constrained by convection-dominated equations have been investigated in \cite{leykekhman2012local,heinkenschloss2010local,liu2024multigrid,becker2007optimal}, which revealed essential differences between solving a single convection-dominated equation and solving an optimal control problem constrained by such an equation.

A key analytical observation in this work is the role of the maximum principle, one of the fundamental properties of elliptic partial differential equations (PDEs). 
This principle is guaranteed by a sufficient condition known as the monotonicity property, which asserts that the inverse of the elliptic operator is nonnegative. 
Although this structure is inherent at the continuous level, it has not always been emphasized in the optimal control literature, particularly in the convection-dominated setting. 
When the monotonicity property is considered in the saddle-point system associated with the optimal control problem, the nonnegativity of the operator’s inverse induces a stability relation between the desired state $y_d$ and the optimal state $\bar{y}$:
\begin{equation*}
\begin{cases}
0 \leq \bar{y}(\mathbf{x}) \leq y_d(\mathbf{x}) & \text{for } y_d(\mathbf{x}) \geq 0,\ \forall\mathbf{x}\in\Omega,\\[0.7ex]
y_d(\mathbf{x}) \leq \bar{y}(\mathbf{x}) \leq 0 & \text{for } y_d(\mathbf{x}) \leq 0,\ \forall\mathbf{x}\in\Omega.
\end{cases}
\end{equation*}
These desired-state bounds are not additional constraints imposed in the optimization formulation; rather, they arise naturally from the intrinsic stability of the elliptic operator and the coupled structure of the PDE–constrained optimal control problem, in which the state and adjoint variables are linked through the saddle-point formulation. 
As a consequence, the optimal state inherits the same monotone behavior as the underlying PDE, leading to stable and physically consistent solution behavior even in convection-dominated regimes.

From a numerical perspective, convection-dominated equations are well known to pose significant challenges in the design of stable and accurate numerical schemes.
The presence of sharp boundary layers near the outflow boundary often leads to spurious oscillations in the numerical solution if no special stabilization is employed \cite{rooscdr,morton1995numerical,johnson2012numerical,knabner2003numerical}. 
To address these difficulties, a wide range of stabilization techniques within the finite element framework have been proposed, including the upwind/Petrov-Galerkin (UPG) methods \cite{brooks1982streamline,da2021supg,bacuta2025convergence}, the edge-averaged schemes \cite{xu1999monotone,adler2023stable,cao2025edge}, local projection methods \cite{knobloch2009local,li2021local}, the algebraic flux corrections (AFC) method \cite{barrenechea2018unified}, and discontinuous Galerkin (DG) formulations \cite{lesaint1974finite,ayuso2009discontinuous,arnold2002unified,arnold1982interior}, among many others \cite{rooscdr}. 
The same numerical difficulties arise in optimal control problems governed by convection-dominated equations. However, the associated saddle-point system consists of two coupled convection-dominated equations with opposite convection directions, which makes the design of stable numerical schemes even more delicate.
As demonstrated in \cite{leykekhman2012local,heinkenschloss2010local}, boundary layers can induce nonphysical oscillations that propagate into the interior of the domain when Dirichlet boundary conditions are imposed strongly, even in the presence of stabilization. DG methods are effective in mitigating this issue, as they impose boundary conditions weakly and naturally incorporate upwind stabilization \cite{liu2024multigrid,leykekhman2012local}. Nevertheless, DG methods may overlook extremely thin boundary layers, which is undesirable in applications where accurate resolution of the layer structure is required, even on relatively coarse meshes.

To address this numerical challenge, we adopt a monotone conforming finite element method, namely the edge-averaged finite element (EAFE) scheme \cite{xu1999monotone}.
This monotone discretization preserves the desired-state bounds in the discrete setting, thereby eliminating nonphysical oscillations and yielding stable, physically consistent numerical solutions.

Our contributions in this work are twofold.
First, we discretize the optimal control problem using the EAFE method and derive a concrete error estimate.
To achieve this, we combine the consistency results of \cite{xu1999monotone} with the discrete inf-sup condition established in \cite{brenner2020multigrid}.
The inf-sup condition not only guarantees the well-posedness of the discrete problem,
but also provides a direct framework for deriving {\textit{a priori}} error estimates.
This analysis can be regarded as an extension of the results in \cite{xu1999monotone} to optimal control problems.
We believe this is the first work to establish a concrete error estimate for the EAFE method applied to optimal control problems.
While our estimate does not explicitly track the dependence on the diffusion coefficient $\varepsilon$ and convection field $\bm{\zeta}$, which would require a more delicate analysis, we refer to \cite{liu2024multigrid} for results that include such parameter-dependent estimates.
Nevertheless, we show that, for sufficiently small $h$, the EAFE method achieves the optimal convergence order in the $H^1$ norm, consistent with the findings in \cite{xu1999monotone}.

Second, we prove that the monotone EAFE scheme preserves the desired-state bounds in the discrete setting, thereby guaranteeing the absence of spurious oscillations.
Several numerical studies have addressed convection-dominated optimal control problems using stabilization techniques such as streamline upwind/Petrov–Galerkin (SUPG)~\cite{collis2002analysis,heinkenschloss2010local}, AFC~\cite{baumgartner2025afc}, and DG formulations~\cite{leykekhman2012local,yucel2015discontinuous}.
While these approaches successfully demonstrated numerically that oscillations can be mitigated, a rigorous analytical explanation of this property has not yet been established.
To the best of our knowledge, this work provides the first mathematical proof that a monotone scheme preserves the stability relation (desired-state bounds) and completely eliminates nonphysical oscillations for optimal control problems constrained by convection-dominated equations.
This monotonicity property is ensured by the $M$-matrix condition of the stiffness matrix associated with the discrete convection-diffusion operator, which determines the direction of the inequalities in the discrete saddle-point system.

The remainder of the paper is organized as follows.
In Section~\ref{sec:prelim}, we introduce the notation, reformulate the optimal control problem as a saddle-point system, and discuss the monotonicity property together with the resulting desired-state bounds at the continuous level.
Section~\ref{sec: eafe_ocp} presents the EAFE method, the corresponding discrete formulation, and the derivation of the discrete desired-state bounds.
The error analysis of the EAFE scheme is carried out in Section~\ref{sec: error_analysis}.
In Section~\ref{sec: numerical_experiment}, we provide a series of numerical experiments that illustrate the performance of the proposed method for convection-dominated optimal control problems.
Finally, concluding remarks are given in Section~\ref{sec:conclusion}.


\section{Continuous Problems and Preliminaries} 
\label{sec:prelim}

In this section, we introduce the notation and basic definitions used throughout the paper, formulate the continuous optimal control problem, and recall key analytical properties such as the maximum principle and the associated desired-state bounds.
These results provide the theoretical foundation for the monotone discretization and analysis developed in the later sections.

Let $\mathcal{D} \subset \mathbb{R}^2$ be a bounded Lipschitz domain. For any real number $s \ge 0$, we denote by $H^s(\mathcal{D})$ the Sobolev space of order $s$, equipped with the norm norm $\|\cdot\|_{H^s(\mathcal{D})}$ and seminorm $|\cdot|_{H^s(\mathcal{D})}$. In particular, $H^0(\mathcal{D}) = L^2(\mathcal{D})$, and $(\cdot,\cdot)_{L^2(\mathcal{D})}$ denotes the $L^2$ inner product.
We denote by $H_0^1(\mathcal{D})$ the subspace of $H^1(\mathcal{D})$ consisting of functions with vanishing trace on $\partial\mathcal{D}$.

\subsection{Optimal Control Problems}
\label{subsec: OCPs}

We consider the following elliptic distributed optimal control problem. Let $\Omega \subset \mathbb{R}^2$ be a bounded convex polygonal domain, let $y_d \in L^2(\Omega)$ be a given desired state, and let $\beta > 0$ be a fixed regularization parameter. The goal is to find the optimal state-control pair $(\bar{y}, \bar{u})$ that minimizes the cost functional:
\begin{equation}\label{optcon}
\left(\bar{y},\bar{u}\right)=\argmin_{(y,u)}\left [ \frac{1}{2}\|y-y_d\|^2_{L^2(\Omega)}+\frac{\beta}{2}\|u\|^2_{L^2(\Omega)}\right],
\end{equation} 
subject to the constraint that $(y, u) \subset H_0^1(\Omega) \times L^2(\Omega)$ if and only if the state equation
\begin{equation}\label{eq:stateeq}
a(y,v)=(u,v)_{L^2(\Omega)}, \quad \forall v\in H^1_0(\Omega)
\end{equation}
is satisfied. The bilinear form $a(\cdot, \cdot)$ is defined by
\begin{equation}\label{eq:abilinear}
  a(y,v):=\int_\Omega \left(\varepsilon\nabla y+\bm{\zeta}y\right)\cdot \nabla v\,\dx+\int_{\Omega} \gamma yv\,\dx,
\end{equation}
where $\varepsilon(\bx) \geq \varepsilon_0 > 0$, the vector field $\bm{\zeta}(\bx) \in [W^{1, \infty}(\Omega)]^2$, and the reaction coefficient $\gamma(\bx) \in W^{1, \infty}(\Omega)$ is nonnegative.
We further assume that
\begin{equation}\label{eq:advassump}
    \gamma-\frac12\nabla\cdot\bm{\zeta}\ge\gamma_0>0,
\end{equation}
which guarantees the well-posedness of the state equation \eqref{eq:stateeq}.
In this work, we focus on the convection-dominated regime, characterized by $$\|\varepsilon\|_{L^\infty(\Omega)}\ll\|\bm{\zeta}\|_{L^\infty(\Omega)}.$$

Following \cite{Lions, Tro}, the solution of \eqref{optcon}--\eqref{eq:stateeq} is characterized by
\begin{subequations}\label{eq:sp}
\begin{alignat}{3}
a(q,\bar{p})&=(\bar{y}-y_d,q)_\LT, \quad &&\forall q\in H^1_0(\Omega),\\
\bar{p}+\beta\bar{u}&=0,\\
a(\bar{y},z)&=(\bar{u},z)_\LT,  \quad &&\forall z\in H^1_0(\Omega),
\end{alignat}
\end{subequations}
where $\bar{p}$ is the adjoint state.
After eliminating $\bar{u}$, we arrive at the saddle-point problem
\begin{subequations}\label{eq:osp}
\begin{alignat}{2}
a(q,\bar{p})-(\bar{y},q)_\LT&=-(y_d,q)_\LT, \quad &&\forall q\in H^1_0(\Omega),\\
-(\bar{p},z)_\LT-\beta a(\bar{y},z)&=0,  \quad &&\forall z \in H^1_0(\Omega).
\end{alignat}
\end{subequations}
By standard saddle-point theory \cite{brenner2020multigrid,Brezzi,Bab}, the system \eqref{eq:osp} has a unique solution. For simplicity, let $\beta=1$ and we consider the following saddle-point problem, 
\begin{subequations}\label{eq:speps}
\begin{alignat}{2}
a(q,\bar{p})-(\bar{y},q)_\LT&=-(y_d,q)_\LT, \quad &&\forall q\in H^1_0(\Omega),\\
-(\bar{p},z)_\LT-a(\bar{y},z)&=0,  \quad &&\forall z \in H^1_0(\Omega).
\end{alignat}
\end{subequations}
Note that, by integration by parts, the saddle-point problem \eqref{eq:speps} involves two convection-diffusion operators that are formally adjoint to each other, acting in opposite convection directions. Specifically, for all $(q,z)\in \Ho\times\Ho$,
\begin{subequations}\label{eq:speps1}
\begin{alignat*}{2}
\left[(\eps\nabla \bar{p}, \nabla q)_\LT-(\bm{\zeta}\bar{p}, \nabla q)_\LT+((\gamma-\nabla\cdot\bm{\zeta}) \bar{p}, q)_\LT\right]-(\bar{y},q)_\LT&=-(y_d,q)_\LT,\\[1ex]
-(\bar{p},z)_\LT-\left[(\eps\nabla \bar{y}, \nabla z)_\LT+(\bm{\zeta} \bar{y}, \nabla z)_\LT+(\gamma \bar{y}, z)_\LT\right]&=0.
\end{alignat*}
\end{subequations}
The saddle-point problem \eqref{eq:speps} can be written in the concise form
\begin{equation}\label{eq:conciseform}
\mathcal{B}((\bar{p},\bar{y}),(q,z))=-(y_d,q)_\LT,\quad \forall (q,z)\in H^1_0(\Omega)\times H^1_0(\Omega),
\end{equation}
where
\begin{equation}\label{eq:bilinear}
\mathcal{B}((p,y),(q,z)):=a(q,p)-(y,q)_\LT-(p,z)_\LT-a(y,z).
\end{equation}
Introducing the bilinear form $\mathcal{B}(\cdot,\cdot)$ allows the coupled state-adjoint system to be represented as a single variational problem, making the analysis of discrete inf-sup conditions and convergence more transparent and structurally unified.

\subsection{Maximum Principle and Stability via Desired-State Bounds}

We discuss a fundamental mathematical property of the optimal control problem constrained by the state equation. The state equation involves the elliptic operator
\[
\mathcal{L}y = -\nabla\cdot(\varepsilon\nabla y + \bm{\zeta}y) + \gamma y.
\]
Together with the boundary condition $y \in H_0^1(\Omega)$, the weak maximum principle ensures that the inverse of $\mathcal{L}$ is nonnegative. In other words, even in the convection-dominated regime, if $\mathcal{L}y(\mathbf{x}) \geq 0$ for all $\mathbf{x} \in \Omega$, then $y(\mathbf{x}) \geq 0$ for all $\mathbf{x} \in \Omega$. 
This property is often referred to as the monotonicity of the operator \cite[Theorem 2, Section 6.4]{evans10}. 

From the saddle-point system \eqref{eq:speps}, we obtain the following implications: For all $\mathbf{x}\in \Omega$,
\begin{align*}
\bar{y}(\mathbf{x}) - y_d(\mathbf{x}) \leq 0 \quad &\Rightarrow \quad \bar{p}(\mathbf{x}) \leq 0 \quad \Rightarrow \quad \bar{y}(\mathbf{x}) \geq 0,\\[0.5ex]
\bar{y}(\mathbf{x}) - y_d(\mathbf{x}) \geq 0 \quad &\Rightarrow \quad \bar{p}(\mathbf{x}) \geq 0 \quad \Rightarrow \quad \bar{y}(\mathbf{x}) \leq 0.
\end{align*}
Consequently, the optimal state $\bar{y}$ satisfies the following bounds determined by the desired state $y_d$:
\begin{equation}\label{eq:desired-state_bounds}
    \begin{cases}
        0 \leq \bar{y}(\mathbf{x}) \leq y_d(\mathbf{x}) & \text{for } y_d(\mathbf{x}) \geq 0,\  \forall\mathbf{x}\in\Omega,\\[0.7ex]
        y_d(\mathbf{x}) \leq \bar{y}(\mathbf{x}) \leq 0 & \text{for } y_d(\mathbf{x}) \leq 0, \ \forall\mathbf{x}\in\Omega.
    \end{cases}
\end{equation}
We call these inequalities as the \emph{desired-state bounds}.

\begin{remark}
It is important to emphasize that these bounds are not imposed as explicit constraints in the optimization problem. 
Rather, they arise naturally as a stability property of the optimal control problem, inherited from the saddle-point problem and the monotonicity of the elliptic operator. In other words, the output of the optimization problem (the optimal state $\bar{y}$) remains bounded by the given data of the problem (the desired state $y_d$), which directly reflects the concept of stability even in convection-dominated regimes.
\end{remark}

A monotone numerical scheme provides a discretization of $\mathcal{L}$ that preserves its monotonicity property; that is, if $\mathcal{L}_h y_h(\mathbf{x}) \geq 0$ for all $\mathbf{x} \in \Omega$, then $y_h(\mathbf{x}) \geq 0$ for all $\mathbf{x} \in \Omega$. Such schemes are well known to yield stable numerical solutions for convection-dominated problems without spurious oscillations (see \cite{xu1999monotone, cao2025edge} for further details). For optimal control problems constrained by convection-dominated state equations, we demonstrate that the monotone scheme preserves the desired-state bounds at the discrete level, preventing spurious oscillations (see Section~\ref{subsec: desired_state_bounds}).


\section{A Monotone Scheme for Optimal Control Problems}
\label{sec: eafe_ocp}

In this section, we develop and analyze a monotone discretization for optimal control problems constrained by convection-diffusion equations. Our approach builds on the edge-averaged finite element (EAFE) method, originally introduced as a monotone scheme for convection-diffusion problems, and extends it to the optimal control setting. We first formulate the discrete saddle-point system based on the EAFE bilinear form, and then show that the resulting scheme inherits the stability properties of the continuous problem. In particular, we establish that the discrete optimal state satisfies the discrete version of the desired-state bounds, which ensures robustness of the scheme and prevents spurious oscillations in convection-dominated regimes.

\subsection{Edge-Averaged Finite Element Discretization}

The EAFE discretization, originally introduced in \cite{xu1999monotone}, is a well-known monotone scheme for convection-diffusion problems. We present a flux-based formulation of the EAFE method that employs locally defined edge potentials together with mass lumping in the N\'ed\'elec space. This approach is particularly effective in convection-dominated regimes, where it provides stable and physically consistent numerical solutions.

For the discrete setting, let $\mathcal{T}_h$ be a shape-regular triangulation of $\Omega$, where each element $T \in \mathcal{T}_h$ is a triangle.
For any element $T \in \mathcal{T}_h$, we denote its diameter by $h_T$, and let $h = \max_{T \in \mathcal{T}_h} h_T$ denote the mesh size of the triangulation.

\subsubsection{Exponentially Fitted Flux and Its Approximation}

We first define the flux by $$J(y) := \varepsilon \nabla y+\bm{\zeta} y.$$
If there exists a scalar function $\psi$ such that $\nabla\psi  = \varepsilon^{-1}\bm{\zeta}$, then the flux can be rewritten as 
\begin{equation*}
J(y) = \kappa\nabla (e^{\psi} y),
\end{equation*}
where $\kappa(\bx):=\varepsilon(\bx) e^{-\psi(\bx)}$.
In general, when  $\textbf{curl}(\varepsilon^{-1}\bm{\zeta})\not=0$, such a global potential function $\psi$ may not exist. However, along any line segment $L$ between $\bx_0$ and $\bx_1$, we can define a one-dimensional edge potential $\psi_L$ through the relation
\begin{equation}\label{eq:edgepotential}
\psi_L(\bx_1) -\psi_L(\bx_0)= \frac{1}{|L|}\int_L \varepsilon^{-1}(\bm{\zeta}\cdot \bm{\tau}_L)\; {\rm d} s,
\end{equation}
where $\bm{\tau}_L=\bx_1-\bx_0$ is the scaled tangent vector to $L$ with $|\bm{\tau}_L|=|L|$.
With this definition, we set $\kappa_L:=\varepsilon e^{-\psi_L}$, and the flux along $L$ satisfies
$$
\left.J(y)\cdot \bm{\tau}_L \right|_L = \left.\kappa_L \nabla (e^{\psi_L} y)\cdot \bm{\tau}_L \right|_L.
$$ 

For the discrete formulation, let $\mathcal{T}_h$ be a conforming triangulation of the domain $\Omega$, and let $V_h \subset H_0^1(\Omega)$ denote the associated piecewise linear finite element space defined on $\mathcal{T}_h$. In the discrete setting, we consider the local flux
$$\left.J(y_h)\right|_T=\left.(\varepsilon \nabla y_h+\bm{\zeta} y_h)\right|_T$$ for $y_h\in V_h$ and each element $T\in\mathcal{T}_h$.
For each edge $E\subset\partial T$, let $\bm{\tau}_E$ denote the scaled tangent vector such that $|\bm{\tau}_E|=|E|$. 
We then define an edgewise potential function $\psi_E$ according to the relation \eqref{eq:edgepotential}.
With this definition, the flux along $E$ can be expressed as
\begin{equation}\label{eq:flux_express}
\left.J(y_h)\cdot\bm{\tau}_E\right|_E=\left.\kappa_E\nabla(e^{\psi_E}y_h)\cdot\bm{\tau}_E\right|_E.
\end{equation}

We denote by $\mathcal{N}_0(T)$ the lowest-order local N\'ed\'elec space, consisting of functions of the form 
\begin{equation*}
\mathbf{p}_0+\mathbf{x}^\perp q_0,\quad\mathbf{p}_0\in (\mathcal{P}_0(T))^2,\ q_0\in \mathcal{P}_0(T),    
\end{equation*}
where $\mathbf{x}^\perp$ denotes a $90^{\circ}$ rotation of $\mathbf{x}$ such that $\mathbf{x}\cdot\mathbf{x}^\perp=0$.
For any $\bv\in \mathcal{N}_0(T)$, the degrees of freedom are defined edgewise by
\begin{equation*}
\text{dof}_E(\mathbf{v})=\frac{1}{|E|}\int_E\mathbf{v}\cdot\bm{\tau}_E\;{\rm d}s=\left.(\mathbf{v}\cdot\bm{\tau}_E)\right|_E.
\end{equation*}
The canonical basis functions $\{\bm{\chi}_{E_j}\}_{j=1}^3\subset \mathcal{N}_0(T)$ are then characterized by
$$\text{dof}_{E_i}(\bm{\chi}_{E_j})=\delta_{ij},\quad 1\leq i,j\leq 3.$$

Applying $\text{dof}_E$ to the scaled flux $\kappa_E^{-1}J(y_h)|_T$ in \eqref{eq:flux_express}, we obtain
\begin{equation}
\text{dof}_{E}(\kappa_E^{-1}J(y_h))
=\delta_{E}(e^{\psi_E} y_h):=(e^{\psi_E} y_h)(\bx_j)-(e^{\psi_E} y_h)(\bx_i),
\label{def delta}
\end{equation}
where $\mathbf{x}_i$ and $\mathbf{x}_j$ are the endpoints of edge $E$, and $\bm{\tau}_E=\bx_j-\bx_i$.
Based on this relation, we define the flux approximation $J_T(y_h)\in \mathcal{N}_0(T)$ for $J(y_h)|_T$ by prescribing its degrees of freedom along each edge $E\subset\partial T$:
\begin{equation}
\text{dof}_E(J_T(y_h))=\left.(J_T(y_h)\cdot \bm{\tau}_E)\right|_E=H(\kappa_E)\delta_{E}(e^{\psi_E} y_h),
\label{edgeidentity}
\end{equation}
where $H(\kappa_E)$ denotes the harmonic average of $\kappa_E=\varepsilon e^{-\psi_E}$ over the edge $E$, defined as
\begin{equation}
H(\kappa_E)=\left(\frac{1}{|E|}\int_{E}\kappa_E^{-1}\;{\rm d}s\right)^{-1}.\label{harmonic average}
\end{equation}

Thus, the flux approximation $J_T(y_h)\in \mathcal{N}_0(T)$ can be written in terms of the canonical basis functions as
\begin{equation*}
J_T (y_h)= \sum_{E\subset \partial T}\text{dof}_E(J_T(y_h))\bm{\chi}_E=\sum_{E\subset \partial T}H({\kappa}_{E})\delta_{E}(e^{\psi_E} y_h)\bm{\chi}_E.
\end{equation*}
Moreover, since $\left.\nabla v_h\right|_T\in \mathcal N_0(T)$, it admits a similar representation:
$$
\nabla v_h=\sum_{E\subset \partial T} (\nabla v_h\cdot\bm{\tau}_E) \bm{\chi}_E = \sum_{E\subset \partial T}\delta_{E}(v_h)\bm{\chi}_E.
$$

\subsubsection{Edge-Averaged Finite Element (EAFE) Scheme}\label{EAFE}

Let $\{\lambda_i(\mathbf{x})\}_{i=1}^3\subset \mathcal{P}_1(T)$ denote the barycentric coordinates associated with the vertices $\{\mathbf{x}_i\}_{i=1}^3$ of an element $T\in \mathcal{T}_h$.
We begin by introducing a mass-lumping approximation for the local mass matrix in the N\'ed\'elec space $\mathcal{N}_0(T)$:
\begin{equation}
\int_T\mathbf{v}\cdot\mathbf{w}\;{\rm d}\mathbf{x}\approx\sum_{E\subset \partial T}\omega_E^T\left.(\mathbf{v}\cdot\bm{\tau}_E)\right|_E\left.(\mathbf{w}\cdot\bm{\tau}_E)\right|_E,\quad\forall\bv,\bw\in \mathcal{N}_0(T),\label{eqn: mass lumping approx}
\end{equation}
where the edge weights $\omega_E^T$ are defined by
\begin{equation*}
\omega_E^T=-\int_T\nabla\lambda_i\cdot\nabla\lambda_j\;{\rm d}\mathbf{x}\quad\text{for the edge $E$ with }\bm{\tau}_E=\mathbf{x}_j-\mathbf{x}_i.
\end{equation*}
Since both $J_T(y_h)$ and $\nabla v_h|_T$ belong to $\mathcal{N}_0(T)$, we apply the mass-lumping approximation to their inner product:
\begin{equation*}
\int_T J_T(y_h)\cdot\nabla v_h\;{\rm d}\mathbf{x}\approx
\sum_{E\subset \partial T}\omega_E^T H({\kappa}_E)\delta_E(e^{\psi_E} y_h)\delta_E(v_h).
\end{equation*}
This leads to the definition of the local EAFE bilinear form:
\begin{equation*}
    a_h^T(y_h,v_h) := \sum_{E\subset \partial T}\omega_E^TH(\kappa_E)\delta_E(e^{\psi_E} y_h)\delta_E(v_h),
\end{equation*}
where $H(\kappa_E)$ is the harmonic average of $\kappa_E$ on edge $E$, as defined in \eqref{harmonic average}. The corresponding global EAFE bilinear form is then given by
\begin{equation}\label{eq:eafebi}
  a_h(y_h,v_h)=\sum_{T\in\Th}a_h^T(y_h,v_h).
\end{equation}

\begin{lemma}[Consistency estimate] Assume that $\varepsilon\in W^{1,\infty}(T)$ and $\bm{\zeta}\in [W^{1,\infty}(T)]^2$ for all $T\in\mathcal{T}_h$. Then, there exists a constant $C>0$, independent of $h$, such that
\begin{equation}
    \left|a(y_h,v_h) - a_h(y_h,v_h)\right|\leq Ch\norm{y_h}_{H^1(\Omega)}\norm{v_h}_{H^1(\Omega)}.\label{eq:estimate_EAFE}
\end{equation}
A proof of this estimate can be found in \cite{xu1999monotone}.
\end{lemma}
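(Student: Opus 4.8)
The plan is to establish \eqref{eq:estimate_EAFE} by localizing to elements, rewriting the EAFE form as an exactly reconstructed flux integral, and then estimating the flux reconstruction error edge by edge. Since both $a(\cdot,\cdot)$ and $a_h(\cdot,\cdot)$ split as sums over $T\in\Th$, I would first write $a(y_h,v_h)-a_h(y_h,v_h)=\sum_{T\in\Th}\bigl(a^T(y_h,v_h)-a_h^T(y_h,v_h)\bigr)$. The zeroth-order reaction contribution $\int_T\gamma y_h v_h\,\dx$ is integrated exactly (or by a consistent mass lumping), so it produces no error (or an error of order $h^2$) and can be set aside; the essential content is the flux term $\int_T J(y_h)\cdot\nabla v_h\,\dx$ measured against $a_h^T$.

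The first key step is that the mass-lumping quadrature \eqref{eqn: mass lumping approx} is \emph{exact} whenever one argument is the gradient of a piecewise linear function. Indeed, the choice $\omega_E^T=-\int_T\nabla\lambda_i\cdot\nabla\lambda_j\,\dx$ is precisely what makes $\sum_{E\subset\partial T}\omega_E^T(\mathbf{v}\cdot\bm{\tau}_E)(\nabla v_h\cdot\bm{\tau}_E)=\int_T\mathbf{v}\cdot\nabla v_h\,\dx$ hold for all $\mathbf{v}\in\mathcal{N}_0(T)$, which one verifies from the zero-row-sum structure of $(\int_T\nabla\lambda_i\cdot\nabla\lambda_j\,\dx)_{ij}$ together with $\nabla v_h|_T\in\mathcal{N}_0(T)$. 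Applying this with $\mathbf{v}=J_T(y_h)$ yields $a_h^T(y_h,v_h)=\int_T J_T(y_h)\cdot\nabla v_h\,\dx$, so the local consistency error collapses to $\int_T\bigl(J(y_h)-J_T(y_h)\bigr)\cdot\nabla v_h\,\dx$, and Cauchy--Schwarz gives $|a^T(y_h,v_h)-a_h^T(y_h,v_h)|\le\norm{J(y_h)-J_T(y_h)}_{L^2(T)}\norm{\nabla v_h}_{L^2(T)}$.

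The core of the argument is therefore to bound $\norm{J(y_h)-J_T(y_h)}_{L^2(T)}$ by $Ch\norm{y_h}_{H^1(T)}$. I would insert the N\'ed\'elec interpolant $\Pi^{\mathcal{N}}J(y_h)$ and split $J(y_h)-J_T(y_h)=\bigl(J(y_h)-\Pi^{\mathcal{N}}J(y_h)\bigr)+\bigl(\Pi^{\mathcal{N}}J(y_h)-J_T(y_h)\bigr)$. The first piece is the standard N\'ed\'elec interpolation error, bounded by $Ch\,|J(y_h)|_{H^1(T)}$; since $\nabla y_h$ is elementwise constant and $\varepsilon\in W^{1,\infty}(T)$, $\bm{\zeta}\in[W^{1,\infty}(T)]^2$, one checks $|J(y_h)|_{H^1(T)}\le C\norm{y_h}_{H^1(T)}$. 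The second piece lies entirely in $\mathcal{N}_0(T)$, so by the scaling of the N\'ed\'elec basis under shape-regularity it is controlled by the edgewise degree-of-freedom discrepancies $\bigl|\tfrac{1}{|E|}\int_E J(y_h)\cdot\bm{\tau}_E\,\ds-H(\kappa_E)\delta_E(e^{\psi_E}y_h)\bigr|$. Using the exact identity \eqref{eq:flux_express}, this discrepancy equals $\int_E\bigl(\kappa_E-H(\kappa_E)\bigr)\tfrac{d}{ds}(e^{\psi_E}y_h)\,\ds$, i.e. exactly the error of replacing $\kappa_E$ by its harmonic average along the edge.

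The main obstacle is precisely this harmonic-averaging estimate. I expect to subtract the mean tangential flux so that the discrepancy becomes a pairing of $\kappa_E-H(\kappa_E)$ against the zero-mean fluctuation of the flux, and then to invoke a Poincar\'e inequality on $E$ together with the $W^{1,\infty}$ bounds on $\varepsilon,\bm{\zeta}$ to extract one power of $h$. The delicate point is that in the convection-dominated regime the edge potential $\psi_E$ can be large, so $\kappa_E=\varepsilon e^{-\psi_E}$ and $e^{\psi_E}y_h$ vary over several orders of magnitude along $E$; the harmonic mean (rather than the arithmetic mean) must be used to keep the relative variation $\norm{\kappa_E-H(\kappa_E)}_{L^2(E)}/H(\kappa_E)$ under control, and the resulting constant $C$ will depend on $\varepsilon$ and $\bm{\zeta}$ in a manner this estimate does not track. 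Once the edge discrepancies are shown to be $O(h)\norm{y_h}_{H^1(T)}$, summing over the three edges, then over all $T\in\Th$, and applying a discrete Cauchy--Schwarz inequality yields \eqref{eq:estimate_EAFE}, reproducing the consistency estimate of \cite{xu1999monotone}.
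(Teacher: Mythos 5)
The paper itself gives no proof of this lemma---it defers entirely to \cite{xu1999monotone}---so your attempt must stand on its own. Its skeleton (elementwise splitting, flux reconstruction in $\mathcal{N}_0(T)$, N\'ed\'elec interpolation error plus edge-discrepancy estimates) is a plausible modern route to the Xu--Zikatanov result, but your ``first key step'' is false, and the rest of the reduction rests on it. You claim the lumped quadrature \eqref{eqn: mass lumping approx} is exact whenever one argument is the gradient of a piecewise linear; this holds only when \emph{both} arguments are constant vectors (that is the content of the zero-row-sum identity, which is an identity between quadratic forms in vertex values), and it fails for the rotational component of a N\'ed\'elec field. Concretely, on the triangle $T$ with vertices $\mathbf{x}_1=(0,0)$, $\mathbf{x}_2=(1,0)$, $\mathbf{x}_3=(0,1)$, the weights are $\omega_{E_{12}}^T=\omega_{E_{13}}^T=\tfrac12$, $\omega_{E_{23}}^T=0$; taking $\mathbf{v}=\mathbf{x}^\perp=(-y,x)\in\mathcal{N}_0(T)$ and $v_h=x$, the tangential component $\mathbf{v}\cdot\bm{\tau}_E$ vanishes identically on the two legs, so the lumped sum equals $0$, whereas $\int_T \mathbf{x}^\perp\cdot\nabla v_h\,\dx=-\int_T y\,\dx=-\tfrac16$. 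Since the reconstructed flux $J_T(y_h)$ has in general a nonzero rotational component ($2|T|q_0=\oint_{\partial T}J_T(y_h)\cdot\hat{\bm{\tau}}\,\ds=\sum_{E}\pm\,\mathrm{dof}_E(J_T(y_h))\neq0$), your identity $a_h^T(y_h,v_h)=\int_T J_T(y_h)\cdot\nabla v_h\,\dx$ is wrong, and the consistency error does \emph{not} collapse to the pure flux-reconstruction term $\int_T(J(y_h)-J_T(y_h))\cdot\nabla v_h\,\dx$.

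The dropped term is a genuine $O(|q_0|\,h_T^2)\norm{\nabla v_h}_{L^2(T)}$ contribution, so the argument is repairable only if you additionally prove $|q_0|\le Ch_T^{-1}\norm{y_h}_{H^1(T)}$, e.g.\ by comparing $\oint_{\partial T}J_T(y_h)\cdot\hat{\bm{\tau}}\,\ds$ with $\int_T \mathrm{curl}\,J(y_h)\,\dx$ (which the $W^{1,\infty}$ hypotheses control, since $\mathrm{curl}(\varepsilon\nabla y_h)=\nabla\varepsilon\times\nabla y_h$ for linear $y_h$) plus the same edge dof discrepancies you already introduced; this is an extra estimate, not a triviality that can be waved through. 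Beyond that, the analytic core of the lemma---that each edge discrepancy $\int_E(\kappa_E-H(\kappa_E))\frac{d}{ds}(e^{\psi_E}y_h)\,\ds$ is $O(h)$ with the correct scaling in $\norm{y_h}_{H^1(T)}$, uniformly as $\psi_E$ becomes large---is exactly what you label ``the main obstacle'' and leave as a plan (Poincar\'e on $E$ plus $W^{1,\infty}$ bounds) rather than a proof; since this is the substance of the cited result in \cite{xu1999monotone}, the proposal as written establishes neither of the two estimates it actually needs. A minor point in your favor: your handling of the reaction term is reasonable, although note that $a_h$ as literally defined in \eqref{eq:eafebi} omits $\gamma$ altogether, so for the lemma to hold as stated one must read a (lumped or exact) reaction term into the discrete form.
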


\begin{remark}
    Regarding the bilinear forms, note that $\bar{\kappa}_E>0$ and that $$\delta_E(e^{\psi_E} \lambda_i)\delta_E(\lambda_j)=-e^{\psi_E(\mathbf{x}_i)}<0.$$ 
    This implies that the off-diagonal entries of the stiffness matrix associated with the global bilinear form are nonpositive if and only if $$\omega_E^T+\omega_E^{T'}\geq 0$$ for every interior edge $E=\partial T\cap \partial T'$.
    This condition is equivalent to the Delaunay condition on the triangulation $\mathcal{T}_h$ in two dimensions (see \cite{xu1999monotone} for further details).
    Therefore, the stiffness matrix corresponding to the EAFE bilinear form is an $M$-matrix if and only if the stiffness matrix for the Poisson problem (with weights $-\omega_E^T$) is also an $M$-matrix. In particular, any invertible $M$-matrix has a positive inverse.
\end{remark}

\begin{remark}
For implementation purposes, the coefficients $\varepsilon$ and $\bm{\zeta}$ are approximated along each edge $E\subset\partial T$ by their midpoint averages, denoted by $\varepsilon_E$ and $\bm{\zeta}_E$, respectively.
For example, $$\varepsilon_E=\frac{\varepsilon(\mathbf{x}_i)+\varepsilon(\mathbf{x}_j)}{2}\quad \text{and}\quad \bm{\zeta}_E=\frac{\bm{\zeta}(\mathbf{x}_i)+\bm{\zeta}(\mathbf{x}_j)}{2},$$
where $\bx_i$ and $\bx_j$ are the endpoints of edge $E$.
The corresponding edgewise potential function is then defined as $$\psi_E(\mathbf{x})={\varepsilon_E}^{-1}\bm{\zeta}_E\cdot\mathbf{x},$$
which is uniquely determined up to an additive constant.
With this approximation, the degrees of freedom in \eqref{edgeidentity} can be explicitly evaluated as
\begin{equation}\label{eqn: easy_computing}
H(\kappa_E)\delta_E(e^{\psi_E} y_h)=\varepsilon_E \mathbb{B}({\varepsilon_E}^{-1}\bm{\zeta}_E\cdot(\mathbf{x}_{i}-\mathbf{x}_{j}))y_h(\mathbf{x}_j)-\varepsilon_E \mathbb{B}({\varepsilon_E}^{-1}\bm{\zeta}_E\cdot(\mathbf{x}_{j}-\mathbf{x}_{i}))y_h(\bx_{i}),
\end{equation}
where $\mathbb{B}(x)$ denotes the Bernoulli function,
\begin{equation}
\mathbb{B}(x)=
\left\{
\begin{array}{cl}
\displaystyle\frac{x}{e^x-1}&  x\not=0,\\[2ex]
1& x=0.
\end{array}
\right.
\label{bernoulli function}
\end{equation}
\end{remark}

\subsection{A Monotone Scheme for Optimal Control Problems}

We now present the monotone discretization for the optimal control problem. The discrete problem is to find $(\bar{p}_h,\bar{y}_h)\in V_h\times V_h$ such that
\begin{equation}\label{eq:disprob_with_yd}
\mathcal{B}_h((\bar{p}_h,\bar{y}_h),(q_h,z_h))=-(y_d,q_h)_{L^2(\Omega)},\quad \forall (q_h,z_h)\in V_h\times V_h,
\end{equation}
where the bilinear form $\cB(\cdot,\cdot)$ is defined as 
\begin{equation}\label{eq:bilinearh}
\mathcal{B}_h((p,y),(q,z))= a_h(q,p) -(y,q)_\LT-(p,z)_\LT-a_h(y,z),
\end{equation}
with $a_h(\cdot,\cdot)$ defined in \eqref{eq:eafebi}.
Equivalently, this can be expressed as the following saddle-point system:
\begin{subequations}\label{eq:discrete_speps}
\begin{alignat}{2}
a_h(q_h,\bar{p}_h)-(\bar{y}_h,q_h)_\LT&=-(y_d,q_h)_\LT, \quad &&\forall q_h\in V_h,\\
-(\bar{p}_h,z_h)_\LT-a_h(\bar{y}_h,z_h)&=0,  \quad &&\forall z_h \in V_h.
\end{alignat}
\end{subequations}

\begin{remark}
Note that $a_h(q,p)$ may not be the EAFE discretization of the bilinear form $a(q,p)$. However, we define our discrete problem using $a_h(q,p)$ so that we can utilize the dual property in the analysis. 
\end{remark}

\begin{remark}
    We could also consider a general saddle-point formulation: find $(p,y)\in\Ho\times\Ho$ such that
\begin{equation}\label{eq:general}
\mathcal{B}((p,y),(q,z))=(f,q)+(g,z), \quad \forall (q,z)\in \Ho\times \Ho,
\end{equation}
where $(f,g)\in\LT\times\LT$ and the bilinear form $\cB(\cdot,\cdot)$ is defined in \eqref{eq:bilinear}. The EAFE discretization of \eqref{eq:general} is to find $(p_h,y_h)\in V_h\times V_h$ such that
\begin{equation}\label{eq:disprob}
\mathcal{B}_h((p_h,y_h),(q_h,z_h))=(f,q_h)+(g,z_h), \quad \forall (q_h,z_h)\in V_h\times V_h.
\end{equation}
This general formulation is more practical if an external force is included in the state equation \eqref{eq:stateeq}, it is also useful when designing numerical experiments.
\end{remark}

\subsection{Preservation of Desired-State Bounds}\label{subsec: desired_state_bounds}

We now establish a discrete version of the desired-state bounds \eqref{eq:desired-state_bounds}, demonstrating that they are preserved by a monotone scheme for optimal control problems subject to convection-dominated state equations.
\begin{theorem} \label{thm: desired-state_bounds}
Let $\bar{y}_h$ be the discrete optimal state of the saddle-point problem \eqref{eq:discrete_speps}, and let $y_d$ denote the desired state.
For each vertex $\mathbf{x}_i$ of the mesh $\mathcal{T}_h$, let $\phi_i \in V_h$ be the corresponding canonical nodal basis function, where $1 \le i \le N_V$ and $N_V$ is the number of vertices in $\mathcal{T}_h$.
Then, for every $1\leq i\leq N_V$, the discrete optimal state satisfies the inequalities
  \begin{equation*}
    \begin{cases}
        0 \leq (\bar{y}_h,\phi_i )_\LT\leq (y_d,\phi_i)_\LT & \text{for } y_d(\mathbf{x}) \geq 0,\  \forall\mathbf{x}\in\Omega, \\[0.5ex]
        (y_d,\phi_i)_\LT \leq (\bar{y}_h,\phi_i)_\LT \leq 0 & \text{for } y_d(\mathbf{x}) \leq 0,\ \forall\mathbf{x}\in\Omega.
    \end{cases}
\end{equation*}
Additionally, the discrete adjoint state $\bar{p}_h$ satisfies the inequalities
\begin{equation*}
    \begin{cases}
        \bar{p}_h\leq0 & \text{for } y_d(\mathbf{x}) \geq 0,\ \forall\mathbf{x}\in \Omega, \\[0.5ex]
        \bar{p}_h\geq0 & \text{for } y_d(\mathbf{x}) \leq 0,\ \forall\mathbf{x}\in \Omega.
    \end{cases}
\end{equation*}
\end{theorem}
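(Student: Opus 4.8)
The plan is to recast the discrete saddle-point system \eqref{eq:discrete_speps} in matrix form and to transfer the continuous implication chain to the discrete level through the $M$-matrix property of the EAFE stiffness matrix. Writing $\bar y_h=\sum_j Y_j\phi_j$ and $\bar p_h=\sum_j P_j\phi_j$, and letting $\mathbf S$ be the EAFE stiffness matrix ($S_{ij}=a_h(\phi_j,\phi_i)$), $\mathbf M$ the $L^2$ mass matrix ($M_{ij}=(\phi_j,\phi_i)_\LT$), and $\mathbf b$ the load vector ($b_i=(y_d,\phi_i)_\LT$), the two rows of \eqref{eq:discrete_speps} become
\begin{equation*}
\mathbf S^{\top}\mathbf P = \mathbf M\mathbf Y - \mathbf b, \qquad \mathbf S\mathbf Y = -\mathbf M\mathbf P .
\end{equation*}
I would first record the sign information supplied by the earlier Remark: under the Delaunay condition $\mathbf S$ is an $M$-matrix, so $\mathbf S^{-1}\ge 0$ and $(\mathbf S^{\top})^{-1}=(\mathbf S^{-1})^{\top}\ge 0$ entrywise; moreover, since the nodal basis functions are nonnegative, $\mathbf M\ge 0$ entrywise and $\mathbf b\ge 0$ whenever $y_d\ge 0$. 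Here $\mathbf c:=\mathbf M\mathbf Y$ is exactly the vector with entries $(\bar y_h,\phi_i)_\LT$, so the theorem amounts to proving $0\le\mathbf c\le\mathbf b$ together with $\mathbf P\le 0$; the latter is equivalent to $\bar p_h\le 0$ pointwise because $\bar p_h$ is piecewise linear with nonnegative basis, so its sign is controlled by its nodal values.

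With this setup the two ``easy'' implications of the continuous chain transfer verbatim. From the adjoint row $\mathbf S^{\top}\mathbf P=\mathbf c-\mathbf b$, the hypothesis $\mathbf c\le\mathbf b$ forces $\mathbf S^{\top}\mathbf P\le 0$ and hence $\mathbf P=(\mathbf S^{\top})^{-1}(\mathbf c-\mathbf b)\le 0$, the discrete analogue of $\bar y-y_d\le 0\Rightarrow\bar p\le 0$. From the state row $\mathbf S\mathbf Y=-\mathbf M\mathbf P$, the sign $\mathbf P\le 0$ gives $\mathbf S\mathbf Y\ge 0$, whence $\mathbf Y=\mathbf S^{-1}(\mathbf S\mathbf Y)\ge 0$ and therefore $\mathbf c=\mathbf M\mathbf Y\ge 0$, the discrete analogue of $\bar p\le 0\Rightarrow\bar y\ge 0$. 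The case $y_d\le 0$ follows identically with all inequalities reversed, using $\mathbf b\le 0$.

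The main obstacle is that these two implications form a closed loop: the nonnegativity of $\mathbf c$ rests on $\mathbf P\le 0$, which rests on the upper bound $\mathbf c\le\mathbf b$, and that upper bound is not a direct consequence of the $M$-matrix property. Indeed, eliminating $\mathbf P$ produces the reduced operator $\mathbf M+\mathbf S^{\top}\mathbf M^{-1}\mathbf S$, which is symmetric positive definite but no longer a $Z$-matrix, because the factor $\mathbf M^{-1}$ destroys the off-diagonal sign pattern; so monotonicity cannot be read off directly from the elimination. Breaking this circularity is the crux, and I expect it to be delicate. One natural attempt is a fixed-point iteration on $(\mathbf P,\mathbf Y)$ driven only by the nonnegative data $\mathbf b$ (start from $\mathbf Y^{(0)}=0$ and alternate $\mathbf S^{\top}\mathbf P^{(k+1)}=\mathbf M\mathbf Y^{(k)}-\mathbf b$, $\mathbf S\mathbf Y^{(k+1)}=-\mathbf M\mathbf P^{(k+1)}$), but the successive corrections alternate in sign, so this naive iteration need not be monotone; I would therefore instead seek a monotone sandwiching within the order interval fixed by $\mathbf b$, or a discrete comparison (supersolution/barrier) argument that bounds $\mathbf c$ above by $\mathbf b$ directly from the coupled structure. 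Once the entry point (either $\mathbf c\le\mathbf b$ or $\mathbf P\le 0$) is secured, the two implications above close the argument, and uniqueness of the discrete solution pins down the claimed signs; the final conversion of $\mathbf P\le 0$ into the pointwise statement $\bar p_h\le 0$ is immediate from nonnegativity of the $P_1$ basis.
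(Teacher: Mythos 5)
Your proposal is, in substance, the paper's own proof: the paper writes the discrete system \eqref{eq:discrete_speps} in exactly your matrix form, $\mathsf{A}^\top\bar{\mathsf{p}}=\mathsf{M}\bar{\mathsf{y}}-\mathsf{f}$ and $\mathsf{A}\bar{\mathsf{y}}=-\mathsf{M}\bar{\mathsf{p}}$, invokes the same sign facts (nonnegative inverse of the $M$-matrix $\mathsf{A}$ and of $\mathsf{A}^\top$, nonnegative mass matrix and load vector), and runs precisely your two implications, in the mirrored case $y_d\le 0$: one-sided bound on $\mathsf{M}\bar{\mathsf{y}}-\mathsf{f}$ gives the sign of $\bar{\mathsf{p}}$, which in turn gives the sign of $\bar{\mathsf{y}}$ and hence of $\mathsf{M}\bar{\mathsf{y}}$.

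The crux you could not close --- securing the entry point $\mathbf{c}\le\mathbf{b}$ (resp.\ $\mathbf{c}\ge\mathbf{b}$) --- is not closed in the paper either. The paper's proof begins by \emph{assuming} $(\bar{y}_h-y_d,\phi_i)_\LT\ge 0$ for all $i$ (its hypothesis \eqref{eqn: bound_int_1}) and at the end combines that assumption with the derived inequality $\mathsf{M}\bar{\mathsf{y}}\le\mathsf{0}$ to assert the two-sided bound. So what is actually proved there is the conditional statement ``if the discrete solution satisfies the one-sided bound relative to $y_d$, then the full desired-state bounds and the adjoint sign follow,'' which is the discrete analogue of the continuous implication chain in Section~\ref{sec:prelim}; the hypothesis of that chain is never independently verified at either the continuous or the discrete level. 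Consequently, your attempt reproduces everything that is in the paper's argument, and your diagnosis of the circularity --- including the observation that eliminating $\mathbf{P}$ produces $\mathbf{M}+\mathbf{S}^\top\mathbf{M}^{-1}\mathbf{S}$, which is symmetric positive definite but not a $Z$-matrix, so monotonicity of the reduced system cannot be read off --- points to a genuine gap in the paper's own proof rather than a defect peculiar to your write-up. None of the repairs you sketch (monotone sandwiching within the order interval, a discrete comparison/barrier argument) appears in the paper; if you can make one of them rigorous, you would have an unconditional proof, which is strictly more than the paper establishes.
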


\begin{proof}
Define the coefficient vectors corresponding to the discrete optimal state $\bar{y}_h$ and the adjoint state $\bar{p}_h$:
\begin{equation*}
    \bar{\mathsf{y}} = \left[ \bar{y}_h(\mathbf{x}_1),\ \bar{y}_h(\mathbf{x}_2),\ \cdots,\  \bar{y}_h(\mathbf{x}_{N_V}) \right]^\top,\quad \bar{\mathsf{p}} = \left[ \bar{p}_h(\mathbf{x}_1),\ \bar{p}_h(\mathbf{x}_2),\ \cdots,\  \bar{p}_h(\mathbf{x}_{N_V}) \right]^\top.
\end{equation*}
The discrete saddle-point problem \eqref{eq:discrete_speps} with the EAFE discretization leads to the linear system
\begin{subequations}\label{eq:system_speps}
\begin{alignat}{2}
\mathsf{A}^\top\bar{\mathsf{p}}-\mathsf{M}\bar{\mathsf{y}}&=-\mathsf{f}(\mathsf{y}_\mathsf{d}),\label{eq:system_speps_1}\\
-\mathsf{M}\bar{\mathsf{p}} - \mathsf{A}\bar{\mathsf{y}}&=\mathsf{0},\label{eq:system_speps_2}
\end{alignat}
\end{subequations}
where $\mathsf{A}_{ij} = a_h(\phi_j,\phi_i)$ is the stiffness matrix satisfying the $M$-matrix condition, $\mathsf{M}_{ij}=(\phi_j,\phi_i)_\LT$ is the mass matrix with nonnegative entries, and $(\mathsf{f}(\mathsf{y}_d))_i = (y_d,\phi_i)_\LT$ is the right-hand side vector for $1\leq i,j\leq N_V$.

Consider first the case $y_d \leq 0$ and assume
\begin{equation}\label{eqn: bound_int_1}
    (\bar{y}_h-y_d,\phi_i)_\LT\geq0\quad\text{for all }1\leq i\leq N_V.
\end{equation}
Then, $\mathsf{M}\bar{\mathsf{y}}-\mathsf{f}(\mathsf{y}_\mathsf{d})\geq\mathsf{0}$, i.e., each component of this vector is nonnegative.
Since $\mathsf{A}$ is an invertible $M$-matrix with a nonnegative inverse,
\begin{equation*}
    \bar{\mathsf{p}}\geq (\mathsf{A}^\top)^{-1}(\mathsf{M}\bar{\mathsf{y}}-\mathsf{f}(\mathsf{y}_\mathsf{d}))\geq\mathsf{0}.
\end{equation*}
From \eqref{eq:system_speps_2}, we obtain $\mathsf{A}\bar{\mathsf{y}} = -\mathsf{M}\bar{\mathsf{p}}$, and the $M$-matrix property implies
\begin{equation*}
    \bar{\mathsf{y}} = \mathsf{A}^{-1}\mathsf{M}(-\bar{\mathsf{p}})\leq \mathsf{0},
\end{equation*}
hence $\mathsf{M}\bar{\mathsf{y}} \leq \mathsf{0}$. Combining this with assumption \eqref{eqn: bound_int_1}, we deduce
\begin{equation*}
     (y_d,\phi_i)_\LT\leq (\bar{y}_h,\phi_i)_\LT \leq 0.
\end{equation*}

In the opposite case $y_d \geq 0$, if
\begin{equation*}
    (\bar{y}_h-y_d,\phi_i)_\LT\leq0\quad\text{for all }1\leq i\leq N_V,
\end{equation*}
then an identical argument yields
\begin{equation*}
     (y_d,\phi_i)_\LT\geq (\bar{y}_h,\phi_i)_\LT \geq 0.
\end{equation*} 
\end{proof}

\begin{remark}
    This result in Theorem~\ref{thm: desired-state_bounds} is consistent with the desired-state bounds satisfied by the continuous optimal state $\bar{y}$. 
Since each canonical basis function $\phi_i$ is nonnegative, the $L^2$ inner products $(\bar{y}_h,\phi_i)_{L^2(\Omega)}$ reveal that the discrete optimal state $\bar{y}_h$ is bounded strongly on one side by zero and weakly on the other side by the desired state $y_d$. 
Thus, the preservation of desired-state bounds in the discrete setting ensures that the discrete optimal state $\bar{y}_h$ remains free from spurious oscillations, as such oscillations would otherwise produce overshoots or undershoots beyond the desired-state bounds.
\end{remark}

\section{Error Analysis}
\label{sec: error_analysis}

In this section, we analyze the well-posedness and convergence of the EAFE discretization \eqref{eq:disprob_with_yd} for the optimal control problem \eqref{eq:speps}.
We first establish the discrete inf-sup condition for the bilinear form $\mathcal{B}_h(\cdot,\cdot)$, which guarantees the stability and uniqueness of the discrete solution.
We then derive convergence results based on the consistency of the EAFE bilinear form $a_h(\cdot,\cdot)$ in \eqref{eq:estimate_EAFE} and the regularity of the continuous solution.

\subsection{Well-Posedness of the Discrete Problem}

We now establish a discrete inf-sup condition for \eqref{eq:disprob_with_yd}, ensuring its well-posedness via standard saddle-point theory (see \cite{Brezzi,Bab}). We follow the approach of \cite[Section 6]{xu1999monotone} and \cite{brenner2020multigrid}.
\begin{lemma}\label{lemma:bhinfsup}
There exists a constant $C>0$ such that for all $(p_h,y_h)\in V_h\times V_h$ and for sufficiently small $h$,
\begin{equation}\label{eqn: infsup_EAFE}
\sup_{(q_h,z_h)\in V_h\times V_h}\frac{\mathcal{B}_h((p_h,y_h),(q_h,z_h))}{\|q_h\|_{H^1(\Omega)}+\|z_h\|_{H^1(\Omega)}}\ge C\left(\|p_h\|_{H^1(\Omega)}+\|y_h\|_{H^1(\Omega)}\right).
\end{equation}
\end{lemma}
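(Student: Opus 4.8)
The plan is to reduce the coupled inf-sup estimate to a coercivity property of the scalar EAFE form $a_h(\cdot,\cdot)$, exploiting the antisymmetric $L^2$ coupling built into $\mathcal{B}_h$ in \eqref{eq:bilinearh}. Given $(p_h,y_h)\in V_h\times V_h$, the natural candidate is the sign-flip test pair $(q_h,z_h)=(p_h,-y_h)$. Substituting into \eqref{eq:bilinearh} and using the symmetry of the $L^2$ inner product, the two cross terms cancel exactly,
\begin{equation*}
\mathcal{B}_h((p_h,y_h),(p_h,-y_h))=a_h(p_h,p_h)-(y_h,p_h)_\LT+(y_h,p_h)_\LT+a_h(y_h,y_h)=a_h(p_h,p_h)+a_h(y_h,y_h),
\end{equation*}
so the whole estimate hinges on bounding $a_h(p_h,p_h)+a_h(y_h,y_h)$ from below. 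This is why no full Brezzi saddle-point machinery is needed: the sign flip turns the indefinite coupled form into a coercive diagonal expression.

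First I would establish coercivity of the continuous form $a(\cdot,\cdot)$ on $\Ho$. Integrating the convection term by parts for $v\in\Ho$ gives $\int_\Omega\bm{\zeta}v\cdot\nabla v\,\dx=-\tfrac12\int_\Omega(\nabla\cdot\bm{\zeta})v^2\,\dx$, so assumption \eqref{eq:advassump} yields
\begin{equation*}
a(v,v)=\int_\Omega\varepsilon|\nabla v|^2\,\dx+\int_\Omega\Big(\gamma-\tfrac12\nabla\cdot\bm{\zeta}\Big)v^2\,\dx\ge\varepsilon_0|v|_{H^1(\Omega)}^2+\gamma_0\|v\|_{L^2(\Omega)}^2\ge\alpha\|v\|_{H^1(\Omega)}^2,
\end{equation*}
with $\alpha:=\min(\varepsilon_0,\gamma_0)>0$. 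I would then transfer this to $a_h$ via the consistency estimate \eqref{eq:estimate_EAFE} applied on the diagonal $y_h=v_h$ (it holds for arbitrary pairs, so this is legitimate): for every $v_h\in V_h$,
\begin{equation*}
a_h(v_h,v_h)\ge a(v_h,v_h)-\big|a(v_h,v_h)-a_h(v_h,v_h)\big|\ge(\alpha-Ch)\|v_h\|_{H^1(\Omega)}^2,
\end{equation*}
so that for all $h\le h_0:=\alpha/(2C)$ we obtain the discrete coercivity $a_h(v_h,v_h)\ge\tfrac{\alpha}{2}\|v_h\|_{H^1(\Omega)}^2$.

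Combining the two ingredients closes the argument. Since $(q_h,z_h)=(p_h,-y_h)$ has denominator $\|p_h\|_{H^1(\Omega)}+\|y_h\|_{H^1(\Omega)}$, the discrete coercivity together with the elementary bound $\|p_h\|_{H^1(\Omega)}^2+\|y_h\|_{H^1(\Omega)}^2\ge\tfrac12\big(\|p_h\|_{H^1(\Omega)}+\|y_h\|_{H^1(\Omega)}\big)^2$ gives
\begin{equation*}
\sup_{(q_h,z_h)}\frac{\mathcal{B}_h((p_h,y_h),(q_h,z_h))}{\|q_h\|_{H^1(\Omega)}+\|z_h\|_{H^1(\Omega)}}\ge\frac{a_h(p_h,p_h)+a_h(y_h,y_h)}{\|p_h\|_{H^1(\Omega)}+\|y_h\|_{H^1(\Omega)}}\ge\frac{\alpha}{4}\big(\|p_h\|_{H^1(\Omega)}+\|y_h\|_{H^1(\Omega)}\big),
\end{equation*}
which is exactly \eqref{eqn: infsup_EAFE} with $C=\alpha/4$.

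The argument is structurally short, so the only delicate point is the perturbation step, and this is where I expect the real (if mild) obstacle to lie: the smallness threshold $h_0$ and the constant $C=\alpha/4$ both inherit a dependence on $\alpha=\min(\varepsilon_0,\gamma_0)$, which degenerates in the convection-dominated regime. This is precisely the parameter dependence the paper chooses not to track, and it explains why the statement is phrased for \emph{sufficiently small} $h$ rather than uniformly in $h$. Everything else — the continuous coercivity computation and the diagonal application of the consistency estimate — is routine; the one idea that must be spotted is the sign-flip cancellation of the $L^2$ cross terms.
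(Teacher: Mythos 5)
Your proof is correct and follows essentially the same strategy as the paper's: choose a sign-manipulated test pair so that the indefinite coupling in $\mathcal{B}_h$ disappears, prove continuous coercivity of $a(\cdot,\cdot)$ from \eqref{eq:advassump}, and transfer it to $a_h(\cdot,\cdot)$ via the consistency estimate \eqref{eq:estimate_EAFE} for sufficiently small $h$. The only difference is your test pair $(p_h,-y_h)$ versus the paper's $(p_h-y_h,\,-p_h-y_h)$: yours cancels the $L^2$ cross terms exactly (the paper's choice instead converts them into the extra nonnegative terms $\|p_h\|_{L^2(\Omega)}^2+\|y_h\|_{L^2(\Omega)}^2$, which are not needed), and as a result you can skip the parallelogram identity the paper uses to relate the test-pair norms back to $\|p_h\|_{H^1(\Omega)}+\|y_h\|_{H^1(\Omega)}$ — a marginally cleaner ending to the same argument.
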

\begin{proof}
  Given $(p_h,y_h)\in V_h\times V_h$, choose the test functions $q_h=p_h-y_h$ and $z_h=-p_h-y_h$ in the bilinear form \eqref{eq:bilinearh}. Then, we obtain
  \begin{equation}
    \mathcal{B}_h((p_h,y_h),(p_h-y_h,-p_h-y_h))=a_h(p_h,p_h)+(y_h,y_h)_\LT+(p_h,p_h)_\LT+a_h(y_h,y_h).
  \end{equation}

  We estimate only the term $a_h(p_h,p_h)$, since the analysis for $a_h(y_h,y_h)$ is identical. Following \cite[Lemma 6.2]{xu1999monotone}, we decompose
  \begin{equation}
    a_h(p_h,p_h)=a(p_h,p_h)+\left[a_h(p_h,p_h)-a(p_h,p_h)\right].
  \end{equation}
  By integration by parts and the assumption \eqref{eq:advassump}, we obtain
  \begin{equation}\label{eq:app}
    a(p_h,p_h)=\|\eps\nabla p_h\|^2_\LT+\int_\Omega \left(\gamma-\frac12\nabla\cdot\bm{\zeta}\right) p_h^2\,\dx\ge \eps_0\|\nabla p_h\|^2_\LT+\gamma_0\|p_h\|^2_\LT.
  \end{equation}
  Furthermore, by the consistency estimate \eqref{eq:estimate_EAFE}, it follows that
  \begin{equation}\label{eq:ahppdiff}
    |a(p_h,p_h)-a_h(p_h,p_h)|\le Ch\|p_h\|^2_{H^1(\Omega)},
  \end{equation}
  where the constant $C>0$ is independent of $h$.
  For sufficiently small $h$, combining \eqref{eq:app} and \eqref{eq:ahppdiff} yields
  \begin{equation}\label{eq:ahpp}
    a_h(p_h,p_h)=a(p_h,p_h)+[a_h(p_h,p_h)-a(p_h,p_h)]\ge C\|p_h\|^2_{H^1(\Omega)}.
  \end{equation}
  
  Consequently, we obtain
  \begin{equation}\label{eq:bhcoer}
    \mathcal{B}_h((p_h,y_h),(p_h-y_h,-p_h-y_h))\ge C\left(\|p_h\|^2_{H^1(\Omega)}+\|y_h\|^2_{H^1(\Omega)}\right).
  \end{equation}
Using the parallelogram identity,
  \begin{equation}\label{eq:parallellaw}
    \|p_h-y_h\|^2_{H^1(\Omega)}+\|p_h+y_h\|^2_{H^1(\Omega)}=2\left((\|p_h\|^2_{H^1(\Omega)}+\|y_h\|^2_{H^1(\Omega)}\right),
  \end{equation}
  we conclude that the discrete inf-sup condition \eqref{eqn: infsup_EAFE} follows immediately from \eqref{eq:bhcoer} and \eqref{eq:parallellaw}.
\end{proof}

\subsection{Convergence Results}

Based on the discrete inf-sup condition established in Lemma~\ref{lemma:bhinfsup}, we derive the following error estimates for the EAFE discretization of the optimal control problem. 

Let $I_h: H^1(\Omega)\rightarrow V_h$ be the standard nodal interpolation operator. Then, the standard approximation estimates hold (cf.~\cite{BS}):
\begin{subequations}\label{eq:interpolation_estimates}
\begin{align}
    \|\bar{y}-I_h\bar{y}\|_{H^1(\Omega)} + \|\bar{p}-I_h\bar{p}\|_{H^1(\Omega)}
    &\leq C h \big( \|\bar{y}\|_{H^2(\Omega)} + \|\bar{p}\|_{H^2(\Omega)} \big),\\
    \|\bar{y}-I_h\bar{y}\|_{L^2(\Omega)} + \|\bar{p}-I_h\bar{p}\|_{L^2(\Omega)}
    &\leq C h^2 \big( \|\bar{y}\|_{H^2(\Omega)} + \|\bar{p}\|_{H^2(\Omega)} \big),
    \end{align}
\end{subequations}
where the constant $C>0$ is independent of $h$.
In addition, the bilinear form $a(\cdot,\cdot)$ satisfies the continuity condition
\begin{equation}\label{eq:continuity_a}
    |a(y,v)| \leq C \norm{y}_{H^1(\Omega)}\norm{v}_{H^1(\Omega)},
\end{equation}
with constant $C>0$ independent of $h$.

\begin{theorem}\label{thm:main}
  Let $(\bar{p},\bar{y})$ be the solution of \eqref{eq:conciseform} and $(\bar{p}_h,\bar{y}_h)$ be the solution of \eqref{eq:disprob_with_yd}.  
Assume that $\bar{p},\bar{y}\in H^2(\Omega)$. Then, for $h$ sufficiently small, the following error bound holds:
  \begin{equation}\label{eq:erroresti}
    \|\bar{y}-\bar{y}_h\|_{H^1(\Omega)}+\|\bar{p}-\bar{p}_h\|_{H^1(\Omega)}\le C h.
  \end{equation}
\end{theorem}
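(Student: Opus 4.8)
The plan is to carry out a Strang-type argument built on the discrete inf-sup condition of Lemma~\ref{lemma:bhinfsup}, treating the difference between $\mathcal{B}$ and $\mathcal{B}_h$ as a consistency error. First I would introduce the nodal interpolants and split the error as
$$\bar{p}-\bar{p}_h=(\bar{p}-I_h\bar{p})+(I_h\bar{p}-\bar{p}_h),\qquad \bar{y}-\bar{y}_h=(\bar{y}-I_h\bar{y})+(I_h\bar{y}-\bar{y}_h).$$
The interpolation parts are immediately controlled by \eqref{eq:interpolation_estimates}, so the whole task reduces to bounding the discrete errors $\chi_p:=I_h\bar{p}-\bar{p}_h$ and $\chi_y:=I_h\bar{y}-\bar{y}_h$, which lie in $V_h\times V_h$.

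Second, I would apply the discrete inf-sup condition \eqref{eqn: infsup_EAFE} to the pair $(\chi_p,\chi_y)$, reducing the estimate to bounding the numerator $\mathcal{B}_h((\chi_p,\chi_y),(q_h,z_h))$ uniformly over test pairs $(q_h,z_h)\in V_h\times V_h$. Using the discrete equation \eqref{eq:disprob_with_yd} to replace $\mathcal{B}_h((\bar{p}_h,\bar{y}_h),(q_h,z_h))$ by $-(y_d,q_h)_\LT$, and using the continuous equation \eqref{eq:conciseform} with the admissible test functions $(q_h,z_h)$ to rewrite $-(y_d,q_h)_\LT$ as $\mathcal{B}((\bar{p},\bar{y}),(q_h,z_h))$, I would obtain
\begin{align*}
\mathcal{B}_h((\chi_p,\chi_y),(q_h,z_h))
&= \big[\mathcal{B}_h((I_h\bar{p},I_h\bar{y}),(q_h,z_h)) - \mathcal{B}((I_h\bar{p},I_h\bar{y}),(q_h,z_h))\big]\\
&\quad + \mathcal{B}((I_h\bar{p}-\bar{p},I_h\bar{y}-\bar{y}),(q_h,z_h)).
\end{align*}

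Third, I would estimate the two terms separately. The first bracket is the genuine EAFE consistency error: since $\mathcal{B}_h$ and $\mathcal{B}$ differ only through the substitution of $a_h$ for $a$, it equals $[a_h(q_h,I_h\bar{p})-a(q_h,I_h\bar{p})]-[a_h(I_h\bar{y},z_h)-a(I_h\bar{y},z_h)]$, and each difference is bounded using \eqref{eq:estimate_EAFE} together with the $H^1$-stability of $I_h$ (so that $\|I_h\bar{p}\|_{H^1(\Omega)}\le C\|\bar{p}\|_{H^2(\Omega)}$), giving an $O(h)$ bound times $(\|q_h\|_{H^1(\Omega)}+\|z_h\|_{H^1(\Omega)})$. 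The second term is an approximation error: continuity of $\mathcal{B}$, which follows from the continuity of $a$ in \eqref{eq:continuity_a} and Cauchy-Schwarz on the mass terms, combined with the interpolation estimates \eqref{eq:interpolation_estimates} again yields an $O(h)$ bound. Dividing by $\|q_h\|_{H^1(\Omega)}+\|z_h\|_{H^1(\Omega)}$, taking the supremum, and invoking the inf-sup condition produces $\|\chi_p\|_{H^1(\Omega)}+\|\chi_y\|_{H^1(\Omega)}\le Ch(\|\bar{p}\|_{H^2(\Omega)}+\|\bar{y}\|_{H^2(\Omega)})$. A final triangle inequality with \eqref{eq:interpolation_estimates} delivers \eqref{eq:erroresti}.

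The main obstacle is the correct bookkeeping in the consistency term: the estimate \eqref{eq:estimate_EAFE} is only available for arguments in $V_h$, so it is essential that the splitting be organized around the interpolants $I_h\bar{p}$ and $I_h\bar{y}$ rather than around $\bar{p},\bar{y}$ directly. Keeping the difference $\mathcal{B}_h-\mathcal{B}$ evaluated entirely on finite element functions is precisely what makes \eqref{eq:estimate_EAFE} applicable; everything else (continuity of $\mathcal{B}$, interpolation, and stability of $I_h$) is routine. One should also check that the small-$h$ requirement from Lemma~\ref{lemma:bhinfsup} is in force, but this introduces no new difficulty.
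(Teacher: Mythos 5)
Your proposal is correct and follows essentially the same route as the paper: both apply the discrete inf-sup condition of Lemma~\ref{lemma:bhinfsup} to $(I_h\bar{p}-\bar{p}_h,\,I_h\bar{y}-\bar{y}_h)$, use \eqref{eq:disprob_with_yd} and \eqref{eq:conciseform} to reduce the numerator to $\mathcal{B}_h((I_h\bar{p},I_h\bar{y}),(q_h,z_h))-\mathcal{B}((\bar{p},\bar{y}),(q_h,z_h))$, and then control it by the EAFE consistency estimate \eqref{eq:estimate_EAFE} on the interpolants plus continuity of $a(\cdot,\cdot)$ and the interpolation bounds \eqref{eq:interpolation_estimates}. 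Your Strang-style grouping into a consistency bracket $[\mathcal{B}_h-\mathcal{B}]$ on interpolants and an approximation term $\mathcal{B}$ on interpolation errors expands into exactly the same six elementary estimates as the paper's terms (I)--(IV), so the difference is purely organizational.
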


\begin{proof}
  By choosing $p_h = I_h\bar{p} - \bar{p}_h$ and $y_h = I_h\bar{y} - \bar{y}_h$ in the discrete inf-sup condition \eqref{eqn: infsup_EAFE} of Lemma~\ref{lemma:bhinfsup}, there exists a constant $C>0$ such that
  \begin{equation*}
    \|I_h\bar{p}-\bar{p}_h\|_{H^1(\Omega)}+\|I_h\bar{y}-\bar{y}_h\|_{H^1(\Omega)}\le C \sup_{(q_h,z_h)\in V_h\times V_h}\frac{\mathcal{B}_h((I_h\bar{p}-\bar{p}_h,I_h\bar{y}-\bar{y}_h),(q_h,z_h))}{\|q_h\|_{H^1(\Omega)}+\|z_h\|_{H^1(\Omega)}}.
  \end{equation*}
    From the definition of the discrete bilinear form $\mathcal{B}_h(\cdot,\cdot)$, together with \eqref{eq:disprob_with_yd} and \eqref{eq:conciseform}, we obtain
\begin{align*}
    \mathcal{B}_h((I_h\bar{p}-\bar{p}_h,I_h\bar{y}-\bar{y}_h),(q_h,z_h)) &= \mathcal{B}_h((I_h\bar{p},I_h\bar{y}),(q_h,z_h)) - \mathcal{B}_h((\bar{p}_h,\bar{y}_h),(q_h,z_h))\\
    &= \mathcal{B}_h((I_h\bar{p},I_h\bar{y}),(q_h,z_h)) +(y_d,q_h)_{L^2(\Omega)}\\
    &= \mathcal{B}_h((I_h\bar{p},I_h\bar{y}),(q_h,z_h)) - \mathcal{B}((\bar{p},\bar{y}),(q_h,z_h)).
\end{align*}
Expanding $\mathcal{B}_h(\cdot,\cdot)$ and $\mathcal{B}(\cdot,\cdot)$, we decompose the difference into four contributions:
\begin{align*}
    &\mathcal{B}_h((I_h\bar{p},I_h\bar{y}),(q_h,z_h)) - \mathcal{B}((\bar{p},\bar{y}),(q_h,z_h))\\[1ex]
    &\qquad\qquad\qquad=\  \underbrace{a_h(q_h,I_h\bar{p})-a(q_h,\bar{p})}_{(\textrm{I})}\ +\  \underbrace{( \bar{y}-I_h\bar{y},q_h)_{L^2(\Omega)}}_{\textrm{(II)}}\\
    &\qquad\qquad\qquad\quad\quad +\  \underbrace{(\bar{p}-I_h\bar{p},z_h)_{L^2(\Omega)}}_{\textrm{(III)}} \ +\  \underbrace{a(\bar{y},z_h)- a_h(I_h\bar{y},z_h)}_{\textrm{(IV)}}.
\end{align*}

\noindent\textbf{Estimate of $\textnormal{(I)}$:}  Applying the triangle inequality, the EAFE consistency estimate \eqref{eq:estimate_EAFE}, the continuity of $a(\cdot,\cdot)$ \eqref{eq:continuity_a}, and the interpolation error bounds \eqref{eq:interpolation_estimates} (neglecting $h^2$-terms for sufficiently small $h$), we obtain
\begin{align*}
    \textrm{(I)} &\leq |a_h(q_h,I_h\bar{p}) - a(q_h,I_h\bar{p})| + |a(q_h,I_h\bar{p}) - a(q_h,\bar{p})|\\
    &\leq Ch\norm{q_h}_{H^1(\Omega)}\norm{I_h\bar{p}}_{H^1(\Omega)} + C\norm{q_h}_{H^1(\Omega)}\norm{I_h\bar{p}-\bar{p}}_{H^1(\Omega)}\\
    &\leq Ch\norm{q_h}_{H^1(\Omega)}\norm{\bar{p}}_{H^2(\Omega)}.
\end{align*}

\noindent\textbf{Estimate of $\textnormal{(II)}$:} By the Cauchy-Schwarz inequality and the interpolation estimate \eqref{eq:interpolation_estimates},
\begin{align*}
    \textrm{(II)}\leq \norm{\bar{y}-I_n\bar{y}}_{L^2(\Omega)}\norm{q_h}_{L^2(\Omega)}\leq Ch^2\norm{\bar{y}}_{H^2(\Omega)}\norm{q_h}_{H^1(\Omega)}.
\end{align*}

\noindent\textbf{Estimate of $\textnormal{(III)}$:} Similarly,
\begin{equation*}
    \textrm{(III)}\leq Ch^2\norm{\bar{p}}_{H^2(\Omega)}\norm{z_h}_{H^1(\Omega)}.
\end{equation*}

\noindent\textbf{Estimate of $\textnormal{(IV)}$:} Proceeding as in (I),
\begin{align*}
    \textrm{(IV)}&\leq |a(\bar{y},z_h) - a(I_h\bar{y},z_h)| + |a(I_h\bar{y},z_h) - a_h(I_h\bar{y},z_h)|\\
    & \leq C\norm{{\bar{y} - I_h\bar{y}}}_{H^1(\Omega)}\norm{z_h}_{H^1(\Omega)} + Ch\norm{{I_h\bar{y}}}_{H^1(\Omega)}\norm{z_h}_{H^1(\Omega)}\\
    &\leq Ch\norm{\bar{y}}_{H^2(\Omega)}\norm{z_h}_{H^1(\Omega)}.
\end{align*}
Collecting the above estimates, we obtain
\begin{align*}
    \mathcal{B}_h((I_h\bar{p}-\bar{p}_h,I_h\bar{y}-\bar{y}_h),(q_h,z_h))\leq Ch\left(\norm{\bar{p}}_{H^2(\Omega)}+\norm{\bar{y}}_{H^2(\Omega)}\right)\left(\norm{q_h}_{H^1(\Omega)}+\norm{z_h}_{H^1(\Omega)}\right).
\end{align*}
Substituting into the discrete inf-sup condition yields the desired bound:
\begin{align*}
    \norm{I_h\bar{p} - \bar{p}_h}_{H^1(\Omega)} + \norm{I_h\bar{y} - \bar{y}_h}_{H^1(\Omega)}\leq Ch\left(\norm{\bar{p}}_{H^2(\Omega)}+\norm{\bar{y}}_{H^2(\Omega)}\right).
\end{align*}
\end{proof}

\begin{remark}
    The estimate \eqref{eq:erroresti} shows that, for sufficiently small $h$, the EAFE discretization \eqref{eq:disprob_with_yd} converges with first-order accuracy, i.e., at a rate of $O(h)$. This result does not account for the influence of the coefficients $\eps$ and $\bm{\zeta}$, whose effects are considerably more subtle to analyze. For a more detailed convergence analysis incorporating these effects in the context of an upwind DG scheme, we refer to \cite{liu2024multigrid}.
\end{remark}


\section{Numerical Experiments}
\label{sec: numerical_experiment}

In this section, we present a series of numerical experiments to verify the theoretical results obtained in the previous sections and to illustrate the performance of the proposed EAFE method~\eqref{eq:disprob_with_yd}. 
All computations are performed in \textsc{MATLAB} using the $i$FEM package~\cite{chen2008ifem}.

Two types of numerical tests are considered. 
In the \emph{stability test} (Section~\ref{subsec: stability_test}), we examine the optimal control formulation defined in problems~\eqref{eq:speps} and~\eqref{eq:discrete_speps}.
Here, $(\bar{y},\bar{p})$ and $(\bar{y}_h,\bar{p}_h)$ denote the continuous and discrete optimal state-adjoint pairs, respectively. 
This test investigates whether the monotone EAFE scheme preserves the desired-state bounds and prevents spurious oscillations in convection-dominated regimes.

In contrast, the \emph{convergence tests} (Sections~\ref{subsec: convergence_tests_boundary} and~\ref{subsec: convergence_tests_interior}) are based on the general coupled problem~\eqref{eq:general} and its discrete counterpart~\eqref{eq:disprob}. 
We denote the numerical errors by $e_y = y - y_h$ and $e_p = p - p_h$, where $(y,p)$ and $(y_h,p_h)$ are the continuous and discrete solutions, respectively. 
These tests include both boundary layer and interior layer examples and are designed to assess the accuracy and robustness of the method. 
For each example, we evaluate the errors both globally over the entire domain~$\Omega$ and locally in interior subregions away from the boundary or interior layers.

\subsection{Stability Test (Desired-State Bounds)}
\label{subsec: stability_test}

We perform a numerical experiment to verify the preservation of the desired-state bounds established in Theorem~\ref{thm: desired-state_bounds}. To clearly illustrate this property, we consider a simple desired state $y_d=1$, which allows for straightforward comparison with the theoretical prediction.

\begin{example}[Desired-State Bound]\label{ex:dsbound}

We consider the problem \eqref{eq:speps} on the domain $\Omega=(0,1)^2$, with parameters $\gamma=0$, $\bm{\zeta}=[-1,0]^T$, $\eps=10^{-9}$, and the desired state $y_d=1$.
\end{example}

\begin{figure}[h!]
    \centering
    \begin{tabular}{ccc}
    Optimal state $\bar{y}_h$ (monotone method) & & Optimal state $\bar{y}_h$ (standard method)\\
    \\
      \includegraphics[width=0.37\linewidth]{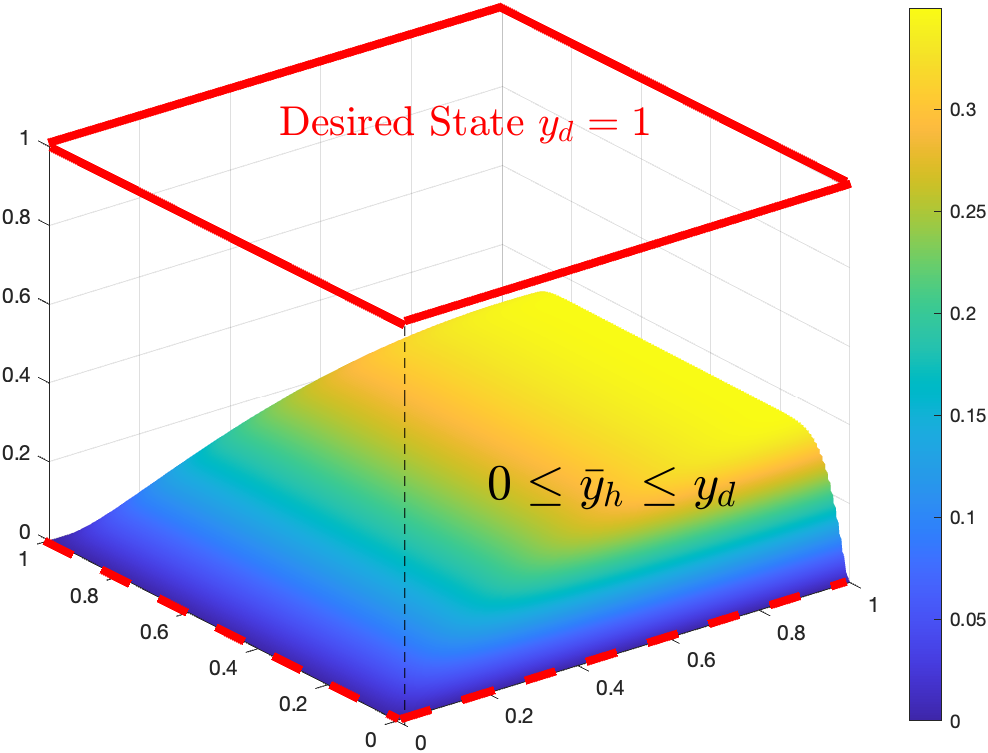} &   & \includegraphics[width=0.37\linewidth]{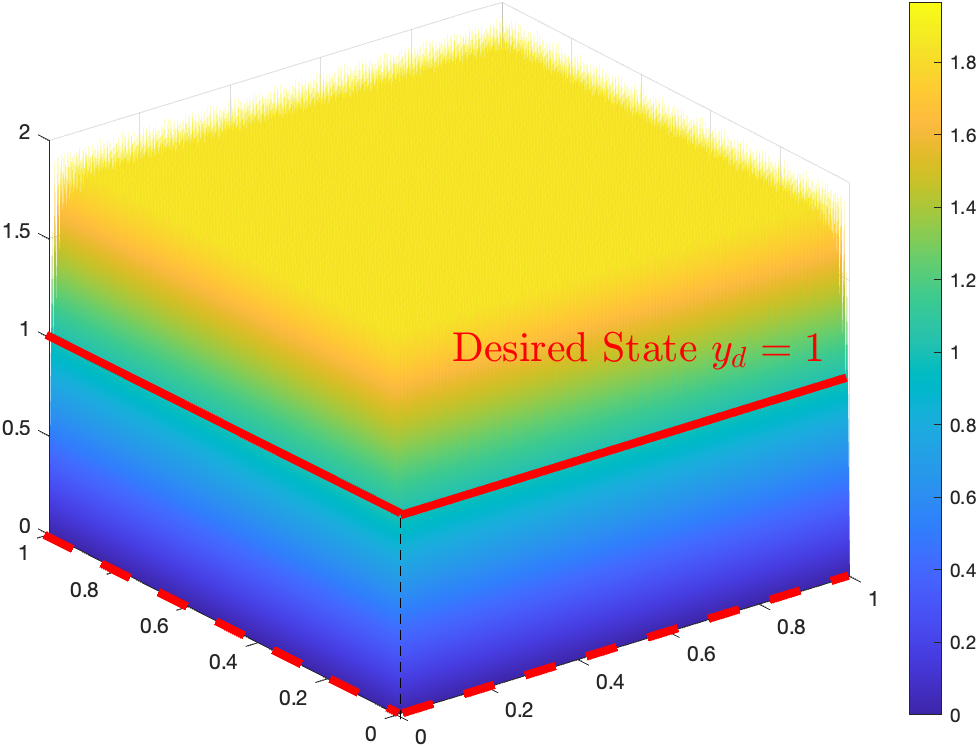} \\
      \\
      Adjoint state $\bar{p}_h$ (monotone method) & & Adjoint state $\bar{p}_h$ (standard method)\\
    \\
      \includegraphics[width=0.37\linewidth]{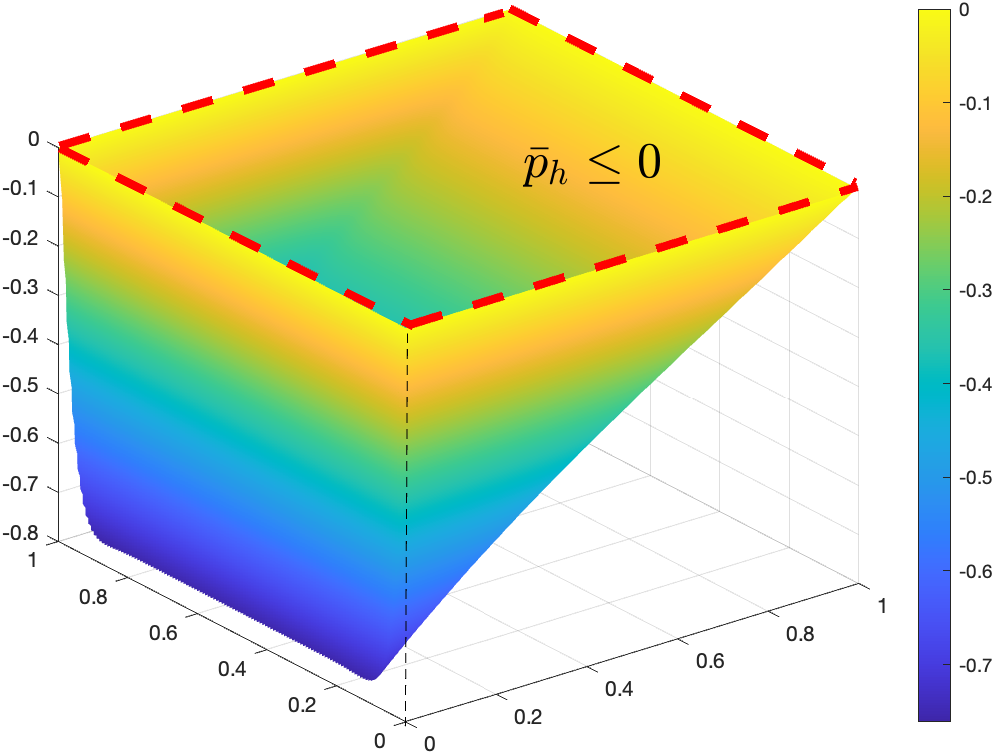} &   & \includegraphics[width=0.37\linewidth]{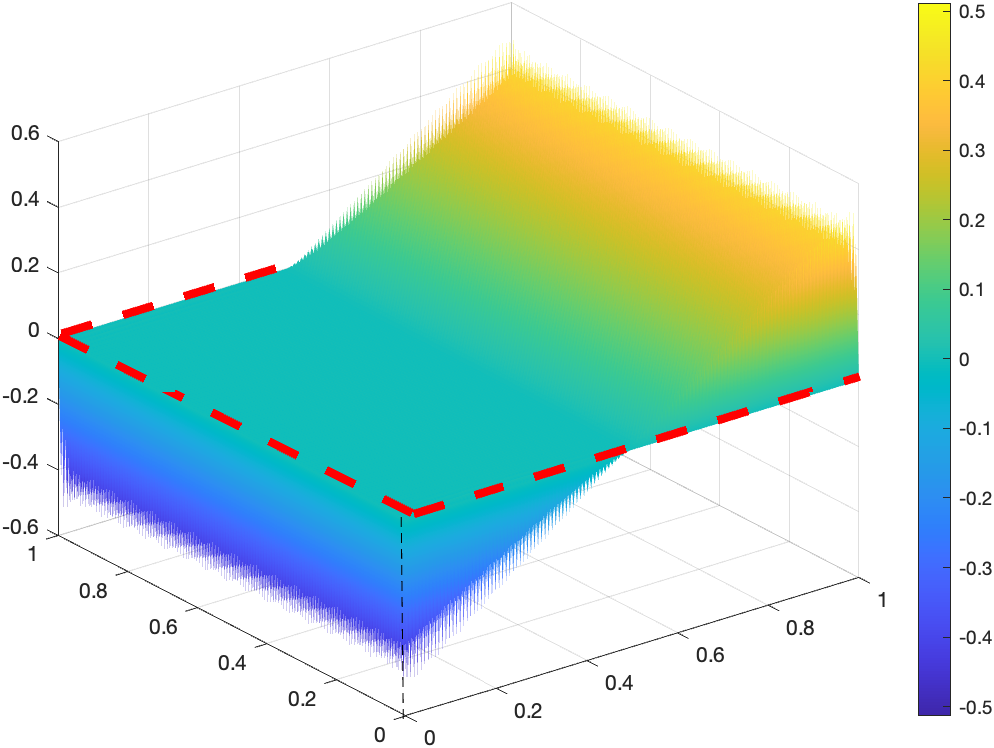}
    \end{tabular}
    \caption{Optimal and adjoint states for Example~\ref{ex:dsbound} with $\varepsilon = 10^{-9}$.}\label{fig:statebounds}
\end{figure}

Figure~\ref{fig:statebounds} demonstrates that the monotone numerical scheme, the EAFE method, produces an optimal state $\bar{y}_h$ that remains strictly between zero and the desired state $y_d$, in full agreement with the theoretical bounds given in Theorem~\ref{thm: desired-state_bounds}. In contrast, the standard (unstabilized) finite element method exhibits nonphysical oscillations, causing the computed optimal state to violate the desired-state bounds. Furthermore, consistent with Theorem~\ref{thm: desired-state_bounds}, the monotone scheme also preserves the bounds for the discrete adjoint state, whereas the standard approach fails to do so. These results confirm that the monotone method retains the key analytical stability properties of elliptic distributed optimal control problems, yielding physically consistent and oscillation-free optimal solutions even in convection-dominated regimes.

\subsection{Convergence Test (Boundary Layer)}\label{subsec: convergence_tests_boundary}

To further assess the accuracy and robustness of the proposed monotone scheme, we next examine its convergence behavior in convection-dominated regimes. In particular, we consider an example that exhibits pronounced boundary layers near the outflow boundaries. This test allows us to evaluate how well the EAFE method captures sharp boundary layers while maintaining stable and physically consistent approximations. The setup of the boundary-layer problem follows standard benchmarks used in \cite{leykekhman2012local, liu2024multigrid}, as described below.

\begin{example}[Boundary Layer]\label{ex:bdlayer}
  We consider the problem \eqref{eq:general} with $\Omega=(0,1)^2$, $\gamma=1$, $\bm{\zeta}=[-\sqrt{2}/2,-\sqrt{2}/2]^t$, and $\varepsilon>0$. 
  The exact solutions are prescribed as $y=\eta(x_1)\eta(x_2)$ and $p=\eta(1-x_1)\eta(1-x_2)$, where
  \[
    \eta(z)=z^3-\frac{e^{\frac{z-1}{\varepsilon}}-e^{-1/\varepsilon}}{1-e^{-1/\varepsilon}}.
  \]
  The state $y$ develops boundary layers near $x_1=1$ and $x_2=1$ for small $\varepsilon$, whereas the adjoint state $p$ (and hence the control $u$) exhibits layers near $x_1=0$ and $x_2=0$.
\end{example}

We report the numerical results in Tables~\ref{table:bdlayerg}–\ref{table:bdlayergl1}.  
For $\varepsilon = 10^{-2}$, as shown in Table~\ref{table:bdlayerg}, the global convergence orders in both the $L^2$ and $H^1$ norms increase and approach the optimal ones as $h$ decreases, which partially agrees with Theorem~\ref{thm:main} regarding $H^1$ convergence.  
  
To further examine the influence of the boundary layer, we also evaluate the local convergence orders on the subdomain $[0.4, 0.6]^2$ for $\varepsilon=10^{-2}$.  
As shown in Table~\ref{table:bdlayergl}, the local convergence orders behave similarly to the global ones, indicating that the boundary layer propagates into the interior and affects regions where the solution is otherwise smooth.  
This observation is consistent with the analysis in \cite{leykekhman2012local, heinkenschloss2010local}, which demonstrates that strongly imposed Dirichlet boundary conditions cause small oscillations in the boundary layer to propagate into the interior domain.  

We next consider the case $\varepsilon=10^{-9}$.  
From Table~\ref{table:bdlayerg1}, we observe that the global convergence order in the $L^2$ norm remains of order $O(h)$, while the $H^1$-error does not converge.  
This behavior results from the extremely sharp boundary layers present in the solution, consistent with the findings in \cite{liu2024multigrid}. 
It is worth noting that the constant appearing in the convergence estimate in Theorem~\ref{thm:main} depends on the diffusion coefficient $\varepsilon$.  
Due to this dependence, and because the proof of convergence for the EAFE method assumes sufficiently small mesh size $h$, achieving the optimal convergence order in highly convection-dominated regimes requires finer meshes as $\varepsilon$ decreases (see \cite{adler2023stable} for related numerical results).  
In practice, applying adaptive mesh refinement strategies concentrated near the boundary layers would allow one to recover optimal accuracy even for very small diffusion coefficients. This observation also suggests a natural direction for future research; developing adaptive monotone schemes that integrate local error indicators or residual-based refinement criteria while maintaining the discrete maximum principle.

Similarly, Table~\ref{table:bdlayergl1} shows that the local convergence orders do not improve significantly, again due to the strongly enforced boundary conditions.  
This further supports the discussion above, where achieving optimal convergence requires either smaller mesh sizes or adaptive refinement concentrated near boundary layers.  
Nevertheless, one of the main advantages of the EAFE method is its ability to accurately capture boundary layers, as illustrated in Figure~\ref{fig:bdlayernumer}.  
In contrast, discontinuous Galerkin methods, while often achieving optimal convergence orders in the interior, tend to underresolve or smooth out extremely thin boundary layers, especially on coarse meshes, due to their weak enforcement of boundary conditions and inherent upwind stabilization.

\begin{figure}[ht]
    \centering
    \subfloat[Numerical solution $p_h$]{\includegraphics[height=2in]{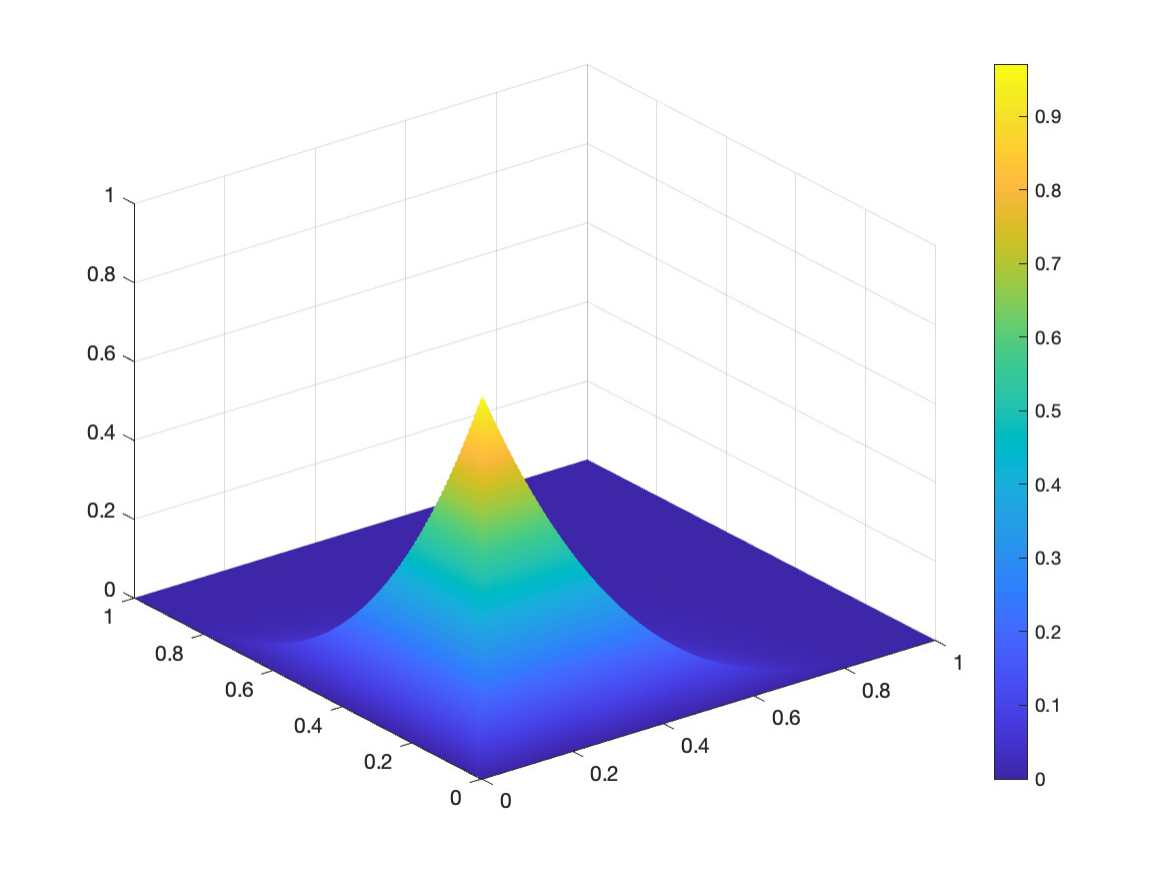}}
    \subfloat[Exact solution $p$]{\includegraphics[height=2in]{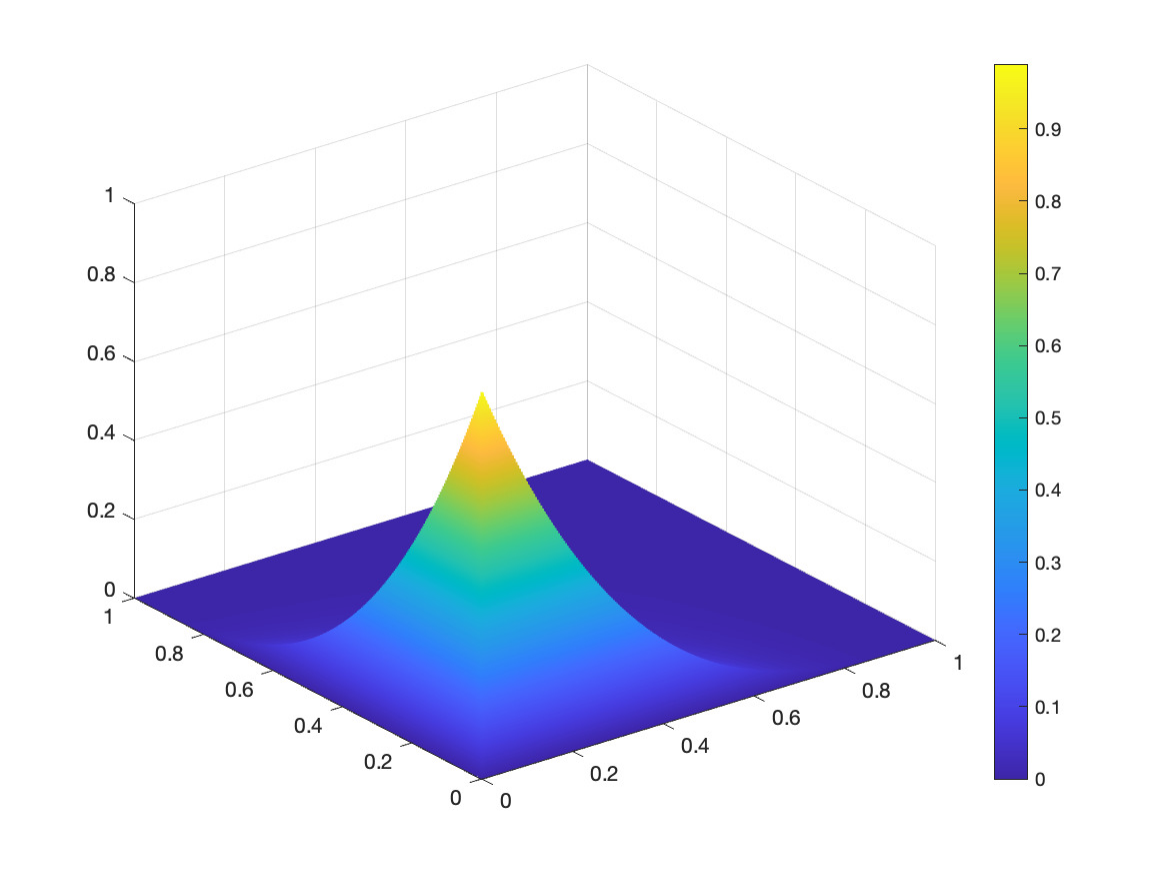}} 
    \caption{Numerical solution and exact solution for Example \ref{ex:bdlayer} with $\eps=10^{-9}$}\label{fig:bdlayernumer}
\end{figure}

\begin{table}[h!]
\centering
\footnotesize
\caption{Convergence rates for Example \ref{ex:bdlayer} with $\varepsilon=10^{-2}$ (Global)}\label{table:bdlayerg}
\begin{tabular}{ccccccccc}\hline
\\
$k$&$\|e_y\|_\LT$&Order&$\|e_y\|_{\HO}$&Order&$\|e_p\|_\LT$&Order&$\|e_p\|_{\HO}$&Order\\
\\
\hline
$1$&1.01e-02&-&1.09e-01&-&1.34e-02&-&1.06e-01&-\\
$2$&1.63e-02&-0.69&3.27e-01&-1.58&2.11e-02&-0.66&3.11e-01&-1.55\\
$3$&1.41e-02&0.21&5.37e-01&-0.72&1.65e-02&0.36&5.23e-01&-0.75\\
$4$&9.50e-03&0.57&6.90e-01&-0.36&1.01e-02&0.71&6.82e-01&-0.38\\
$5$&4.65e-03&1.03&5.55e-01&0.31&4.75e-03&1.09&5.52e-01&0.31\\
$6$&1.72e-03&1.43&3.09e-01&0.84&1.74e-03&1.45&3.08e-01&0.84\\
$7$&5.28e-04&1.71&1.44e-01&1.10&5.33e-04&1.71&1.44e-01&1.10\\
$8$&1.47e-04&1.85&6.37e-02&1.18&1.48e-04&1.85&6.37e-02&1.18\\
\hline
\end{tabular}
\end{table}

\begin{table}[h!]
\centering
\footnotesize
\caption{Convergence rates for Example \ref{ex:bdlayer} with $\varepsilon=10^{-2}$ (Local)}\label{table:bdlayergl}
\begin{tabular}{ccccccccc}\hline
\\
$k$&$\|e_y\|_\LT$&Order&$\|e_y\|_{\HO}$&Order&$\|e_p\|_\LT$&Order&$\|e_p\|_{\HO}$&Order\\
\\
\hline
$1$&4.34e-03&-&3.01e-02&-&1.21e-02&-&8.38e-02&-\\
$2$&2.95e-04&3.88&4.09e-03&2.88&4.40e-03&1.46&6.10e-02&0.46\\
$3$&1.23e-03&-2.05&3.36e-02&-3.04&1.66e-03&1.40&3.57e-02&0.77\\
$4$&7.70e-04&0.67&2.53e-02&0.41&9.30e-04&0.84&2.26e-02&0.66\\
$5$&2.87e-04&1.42&1.09e-02&1.21&3.27e-04&1.51&1.40e-02&0.69\\
$6$&8.76e-05&1.71&5.18e-03&1.08&9.00e-05&1.86&6.00e-03&1.22\\
$7$&2.48e-05&1.82&2.69e-03&0.94&2.40e-05&1.91&2.27e-03&1.40\\
$8$&6.61e-06&1.91&1.19e-03&1.18&6.29e-06&1.93&1.06e-03&1.11\\
\hline
\end{tabular}
\end{table}

\begin{table}[h!]
\centering
\footnotesize
\caption{Convergence rates for Example \ref{ex:bdlayer} with $\varepsilon=10^{-9}$ (Global)}\label{table:bdlayerg1}
\begin{tabular}{ccccccccc}\hline
\\
$k$&$\|e_y\|_\LT$&Order&$\|e_y\|_{\HO}$&Order&$\|e_p\|_\LT$&Order&$\|e_p\|_{\HO}$&Order\\
\\
\hline
$1$&8.61e-03&-&9.36e-02&-&1.46e-02&-&9.73e-02&-\\
$2$&1.52e-02&-0.82&2.66e-01&-1.51&2.34e-02&-0.68&2.47e-01&-1.34\\
$3$&1.36e-02&0.16&3.39e-01&-0.35&1.93e-02&0.28&3.22e-01&-0.38\\
$4$&9.15e-03&0.57&3.53e-01&-0.06&1.23e-02&0.64&3.42e-01&-0.09\\
$5$&5.32e-03&0.78&3.50e-01&0.02&6.98e-03&0.82&3.43e-01&-0.01\\
$6$&2.87e-03&0.89&3.44e-01&0.02&3.72e-03&0.91&3.41e-01&0.01\\
$7$&1.49e-03&0.95&3.40e-01&0.02&1.92e-03&0.96&3.39e-01&0.01\\
$8$&7.59e-04&0.97&3.38e-01&0.01&9.73e-04&0.98&3.37e-01&0.01\\
\hline
\end{tabular}
\end{table}

\begin{table}[h!]
\centering
\footnotesize
\caption{Convergence rates for Example \ref{ex:bdlayer} with $\varepsilon=10^{-9}$ (Local)}\label{table:bdlayergl1}
\begin{tabular}{ccccccccc}\hline
\\
$k$&$\|e_y\|_\LT$&Order&$\|e_y\|_{\HO}$&Order&$\|e_p\|_\LT$&Order&$\|e_p\|_{\HO}$&Order\\
\\
\hline
$1$&4.16e-03&-&2.88e-02&-&1.37e-02&-&9.47e-02&-\\
$2$&1.38e-03&1.59&1.91e-02&0.59&5.37e-03&1.35&7.45e-02&0.35\\
$3$&2.70e-03&-0.97&6.68e-02&-1.80&2.58e-03&1.06&5.56e-02&0.42\\
$4$&2.13e-03&0.34&6.20e-02&0.11&1.92e-03&0.42&4.69e-02&0.25\\
$5$&1.20e-03&0.83&4.15e-02&0.58&1.06e-03&0.86&4.17e-02&0.17\\
$6$&6.13e-04&0.97&3.14e-02&0.40&5.19e-04&1.03&3.08e-02&0.44\\
$7$&3.17e-04&0.95&2.76e-02&0.19&2.62e-04&0.99&2.13e-02&0.53\\
$8$&1.62e-04&0.97&2.22e-02&0.32&1.34e-04&0.97&1.83e-02&0.22\\
\hline
\end{tabular}
\end{table}

\subsection{Convergence Test (Interior Layer)}\label{subsec: convergence_tests_interior}

We next examine an example featuring an interior layer to further evaluate the robustness and accuracy of the EAFE method in convection-dominated regimes.

\begin{example}[Interior Layer]\label{ex:inlayer}
    In this example, we take $\Omega = (0,1)^2$, $\gamma = 1$, and $\bm{\zeta} = [-1,0]^t$, and let the exact solutions of \eqref{eq:general} be 
    \begin{equation}
        y = (1 - x_1)^3 \arctan\!\left(\frac{x_2 - 0.5}{\varepsilon}\right), 
        \qquad 
        p = x_1(1 - x_1)x_2(1 - x_2).
    \end{equation}
    The exact state $y$ exhibits an interior layer along the line $x_2 = 0.5$ when $\varepsilon$ is small.
\end{example}

The convergence orders in Example~\ref{ex:inlayer} are similar to those observed in Example~\ref{ex:bdlayer}. 
Here, we measure the local convergence orders on $[0.65,1] \times [0,1]$. 
We observe that the interior layer also affects the convergence behavior in regions away from the layer, reducing local accuracy even in smooth portions of the domain.  
Nevertheless, as shown in Figure~\ref{fig:inlayernumer}, the EAFE method accurately captures the sharp interior layer of $y$ along $x_2 = 0.5$, demonstrating its robustness in resolving steep gradients without introducing spurious oscillations.

\begin{figure}[h!]
    \centering
    \subfloat[Numerical solution $y_h$]{\includegraphics[height=2in]{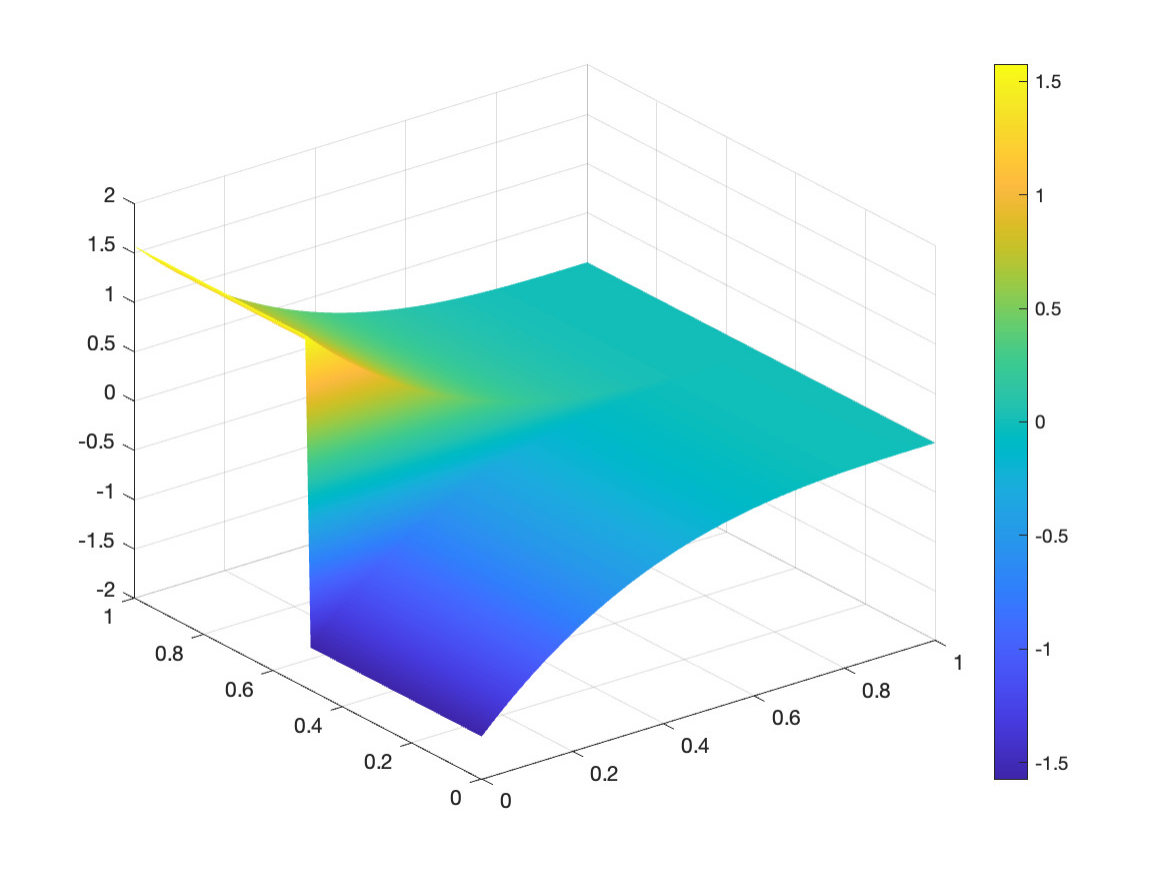}}
    \subfloat[Exact solution $y$]{\includegraphics[height=2in]{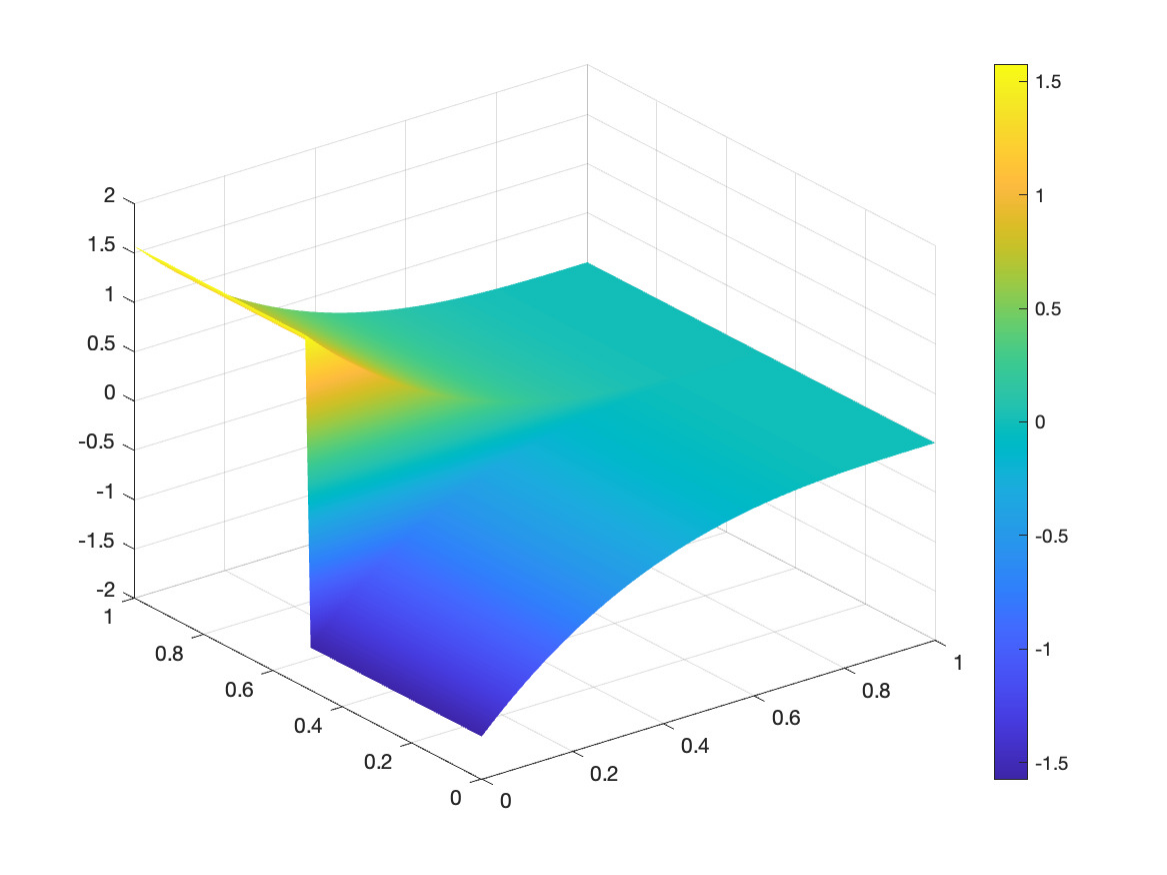}} 
    \caption{Numerical solution and exact solution for Example \ref{ex:inlayer} with $\eps=10^{-9}$}\label{fig:inlayernumer}
\end{figure}

\begin{table}[h!]
\centering
\footnotesize
\caption{Convergence rates for Example \ref{ex:inlayer} with $\varepsilon=10^{-2}$ (Global)}\label{table:inlayerg}
\begin{tabular}{ccccccccc}\hline
\\
$k$&$\|e_y\|_\LT$&Order&$\|e_y\|_{\HO}$&Order&$\|e_p\|_\LT$&Order&$\|e_p\|_{\HO}$&Order\\
\\
\hline
$1$&2.78e-03&-&1.44e-02&-&1.82e-02&-&9.38e-02&-\\
$2$&3.37e-03&-0.28&1.83e-02&-0.34&1.30e-02&0.49&6.72e-02&0.48\\
$3$&2.16e-03&0.64&1.40e-02&0.38&6.60e-03&0.98&4.60e-02&0.55\\
$4$&9.63e-04&1.17&8.03e-03&0.81&2.67e-03&1.31&3.47e-02&0.40\\
$5$&3.36e-04&1.52&3.40e-03&1.24&9.08e-04&1.55&2.41e-02&0.53\\
$6$&1.02e-04&1.72&1.14e-03&1.58&2.77e-04&1.71&1.37e-02&0.82\\
$7$&2.86e-05&1.83&3.32e-04&1.77&7.83e-05&1.82&7.01e-03&0.96\\
$8$&7.64e-06&1.91&9.04e-05&1.88&2.10e-05&1.90&3.50e-03&1.00\\
\hline
\end{tabular}
\end{table}

\begin{table}[h!]
\centering
\footnotesize
\caption{Convergence rates for Example \ref{ex:inlayer} with $\varepsilon=10^{-2}$ (Local)}\label{table:inlayergl}
\begin{tabular}{ccccccccc}\hline
\\
$k$&$\|e_y\|_\LT$&Order&$\|e_y\|_{\HO}$&Order&$\|e_p\|_\LT$&Order&$\|e_p\|_{\HO}$&Order\\
\\
\hline
$1$&1.36e-03&-&1.33e-02&-&6.77e-03&-&6.63e-02&-\\
$2$&1.97e-03&-0.54&1.93e-02&-0.54&4.52e-03&0.58&4.48e-02&0.56\\
$3$&1.45e-03&0.44&1.68e-02&0.20&2.57e-03&0.81&3.98e-02&0.17\\
$4$&7.15e-04&1.02&1.02e-02&0.73&1.22e-03&1.07&2.96e-02&0.43\\
$5$&2.60e-04&1.46&4.63e-03&1.13&4.73e-04&1.37&1.86e-02&0.67\\
$6$&8.02e-05&1.69&1.77e-03&1.38&1.57e-04&1.59&1.03e-02&0.85\\
$7$&2.28e-05&1.81&6.33e-04&1.49&4.70e-05&1.74&5.28e-03&0.96\\
$8$&6.14e-06&1.89&2.22e-04&1.51&1.30e-05&1.85&2.60e-03&1.02\\
\hline
\end{tabular}
\end{table}

\begin{table}[h!]
\centering
\footnotesize
\caption{Convergence rates for Example \ref{ex:inlayer} with $\varepsilon=10^{-9}$ (Global)}\label{table:inlayerg1}
\begin{tabular}{ccccccccc}\hline
\\
$k$&$\|e_y\|_\LT$&Order&$\|e_y\|_{\HO}$&Order&$\|e_p\|_\LT$&Order&$\|e_p\|_{\HO}$&Order\\
\\
\hline
$1$&3.05e-03&-&1.57e-02&-&1.91e-02&-&9.81e-02&-\\
$2$&3.93e-03&-0.37&2.11e-02&-0.43&1.46e-02&0.39&7.50e-02&0.39\\
$3$&2.93e-03&0.42&1.85e-02&0.19&8.56e-03&0.77&5.55e-02&0.43\\
$4$&1.77e-03&0.73&1.41e-02&0.39&4.58e-03&0.90&4.47e-02&0.31\\
$5$&9.69e-04&0.87&1.03e-02&0.46&2.37e-03&0.95&3.96e-02&0.18\\
$6$&5.08e-04&0.93&7.37e-03&0.48&1.20e-03&0.98&3.73e-02&0.09\\
$7$&2.60e-04&0.97&5.24e-03&0.49&6.06e-04&0.99&3.62e-02&0.04\\
$8$&1.32e-04&0.98&3.72e-03&0.50&3.04e-04&0.99&3.56e-02&0.02\\
\hline
\end{tabular}
\end{table}

\begin{table}[h!]
\centering
\footnotesize
\caption{Convergence rates for Example \ref{ex:inlayer} with $\varepsilon=10^{-9}$ (Local)}\label{table:inlayergl1}
\begin{tabular}{ccccccccc}\hline
\\
$k$&$\|e_y\|_\LT$&Order&$\|e_y\|_{\HO}$&Order&$\|e_p\|_\LT$&Order&$\|e_p\|_{\HO}$&Order\\
\\
\hline
$1$&1.47e-03&-&1.44e-02&-&6.96e-03&-&6.82e-02&-\\
$2$&2.27e-03&-0.63&2.23e-02&-0.63&4.84e-03&0.52&4.75e-02&0.52\\
$3$&1.93e-03&0.24&2.22e-02&0.00&2.94e-03&0.72&4.37e-02&0.12\\
$4$&1.26e-03&0.61&1.82e-02&0.29&1.63e-03&0.85&3.55e-02&0.30\\
$5$&7.07e-04&0.84&1.38e-02&0.40&8.28e-04&0.98&2.90e-02&0.29\\
$6$&3.71e-04&0.93&1.01e-02&0.45&4.13e-04&1.01&2.68e-02&0.12\\
$7$&1.91e-04&0.96&7.26e-03&0.47&2.07e-04&0.99&2.60e-02&0.04\\
$8$&9.71e-05&0.98&5.18e-03&0.49&1.04e-04&0.99&2.53e-02&0.04\\
\hline
\end{tabular}
\end{table}


\section{Conclusion}
\label{sec:conclusion}

In this work, we developed and analyzed a monotone finite element scheme for elliptic distributed optimal control problems constrained by a convection-diffusion-reaction equation. 
By employing the EAFE discretization, we established a framework that not only ensures monotonicity but also preserves the desired-state bounds in the discrete setting. 
This property guarantees that the numerical optimal state remains stable and free from nonphysical oscillations, even in convection-dominated regimes. 

The theoretical analysis combined the consistency of the EAFE scheme with a discrete inf-sup condition, leading to a rigorous proof of well-posedness and an $O(h)$ convergence order in the $H^1$ norm. 
Extensive numerical experiments further validated the theoretical results, confirming that the EAFE method accurately captures both boundary and interior layers without introducing spurious oscillations.

Future research directions include extending the present framework to adaptive mesh refinement strategies (e.g., \cite{yan2025stabilization,yucel2015discontinuous}) and preconditioning techniques (e.g., \cite{jeong2025c,brenner2023additive,liu2025balancing,liu2025convergence}) specifically tailored for the EAFE method in optimal control problems. It is also interesting to consider pointwise control constraints \cite{BOMMANABOYANA2025100624,becker2007optimal} and state constraints \cite{brenner2023multigrid,liu2024discontinuous,brenner2024c0}.
Another promising direction is to generalize the approach to more complex multiphysics optimal control settings, such as those involving coupled flow and transport phenomena or high-dimensional convection-diffusion systems.

\subsection*{Acknowledgment}

This material is based upon work supported by the National Science Foundation under Grant No. DMS-1929284 while S. Liu was in residence at the Institute for Computational and Experimental Research in Mathematics in Providence, RI, during the Numerical PDEs: Analysis, Algorithms, and Data Challenges semester program.


\bibliographystyle{plain}
\bibliography{EAFE_OCP}
	
\end{document}